\title{Transitivity of codimension one non-invertible conservative skew-products}
\author{Martin Andersson \\ Universidade Federal Fluminense \\ nilsmartin@id.uff.br 
\and Javier Correa \thanks{The second author has been supported by CAPES} \\ Universidade Federal do Rio de Janeiro \\ jacorrea88@gmail.com}
\begin{document}

\newtheoremstyle{basico}
  {0,2cm}
  {}
  {\itshape}
  {0,5cm}
  {\bfseries}
  {}
  {0,2cm}
  {\thmname{#1}\thmnumber{ #2}:\thmnote{ #3}}
\theoremstyle{basico}  
  
\newtheorem{teo}{Theorem}[section]
\newtheorem{coroprin}{Corollary}
\newtheorem{lemaprin}{Lemma}
\newtheorem{propprin}{Proposition}
\newtheorem{lema}[teo]{Lemma}
\newtheorem{prop}[teo]{Proposition}
\newtheorem{obs}[equation]{Observation}
\newtheorem{ejem}{Example}
\newtheorem{defn}{Definition}
\newtheorem{obsnot}{Notation}
\newtheorem{afim}{Afirmation}
\newtheorem{teoprin}{Theorem}
\newtheorem{quest}{Question}
\newtheorem{rem}[teo]{Remark}

\newcommand{\noi}{\noindent}

\newcommand{\La}{\Lambda}
\newcommand{\s}{\sigma}
\newcommand{\al}{\alpha}
\newcommand{\de}{\delta}
\newcommand{\e}{\epsilon}
\newcommand{\ga}{\gamma}
\newcommand{\be}{\beta}

\newcommand{\la}{\lambda}
\newcommand{\N}{\mathbb{N}}
\newcommand{\Z}{\mathbb{Z}}
\newcommand{\Q}{\mathbb{Q}}
\newcommand{\R}{\mathbb{R}}
\newcommand{\A}{\mathcal{A}}
\newcommand{\B}{\mathcal{B}}
\newcommand{\U}{\mathcal{U}}
\newcommand{\Pa}{\mathcal{P}}
\newcommand{\T}{\mathbb{T}}
\newcommand{\C}{\mathbb{C}}
\newcommand{\hf}{\hat{f}}
\newcommand{\hu}{\hat{U}}
\newcommand{\hv}{\hat{V}}
\newcommand{\htag}{\texttt{\#}}
\newcommand{\Ker}{\text{Ker}}
\newcommand{\Ima}{\text{Im}}

\maketitle

\abstract{In this work we explore the problem of transitivity of volume preserving skew-products endomorphisms of the n-torus. More specifically, we establish relationships between transitivity and the action induced by the skew-product in the fundamental group.}

\marginsize{2,5cm}{2,5cm}{2cm}{3cm}
\setlength{\parskip}{0,2cm}
\setlength{\parindent}{0.5cm}

\section{Introduction}
In dynamical systems, an important family to study is the family of skew-products. They are easy to build and have a simple structure, yet they have enough complexity to model more general systems. Our focus in this paper will be volume-preserving non-invertible skew-products. A general goal for volume-preserving maps is to know whether or not they are ergodic. Since ergodicity is stronger than transitivity, we consider a good starting point to address the transitivity.

By a toral endomorphism we mean a surjective local homeomorphism $f:\T^n\to\T^n$. In other words, a covering map from $\T^n$ to itself. Let $\mu$ be the Haar measure on $\T^n$. We say that $f$ is \emph{volume-preserving} (or \emph{conservative}) if $\mu(f^{-1}(B)) = \mu(B)$ for every Borel measurable set $B\subset \T^n$. 

We say that $f$ is transitive if there exists $z\in \T^n$ such that $\T^n = \overline{\{f^n(z):n\in \N\}}$.

It is reasonable to expect transitivity for volume-preserving non-invertible endomorphisms  under quite general circumstances. First of all, note that linear (hence volume-preserving) non-invertible toral endomorphisms are always transitive (in fact ergodic \cite{AH94}). Indeed, they are robustly transitive: every $C^1$ close endomorphism (not necessarily conservative) is also transitive. In dimension two, every conservative endomorphism homotopic to a non-invertible hyperbolic linear map is transitive \cite{An15}.  Furthermore, Lizana and Pujals in \cite{LiPu12}  provided  sufficient conditions for $C^1$ endomorphisms to be robustly transitive. Rather than dealing with conservative endomorphism, they consider endomorphisms with Jacobian larger than one.

Given $h:\T^{n-1}\to \T^{n-1}$ and $g:\T^{n-1}\times \T^1\to \T^1$ we define $f:\T^n\to \T^n$ by $f(x,t)=(h(x),g(x,t))\ \forall x\in \T^{n-1},\ \forall t\in \T^1$. We say that $f$ is a skew-product of codimension $1$ and has the form $f = (h,g)$. We shall refer to $h$ as the action in the base and $g$ as the action in the fibers. For $x\in \T^{n-1}$ let us define the map $g_x:\T^1\to\T^1$ by $g_x(t) = g(x,t)$. Note that, since $f$ is a covering map, so are $h$ and $g_x$ for every $x \in \T^{n-1}$. Let $\deg(f)$, $\deg(h)$, and $\deg(g_x)$ denote their (unsigned) degrees, this is, the number of preimages of any point. Since $g$ is continuous and $\deg(g_x)$ is a homotopy invariant,  the number $\deg(g_x)$ does not depend on $x$ and we denote it by $\deg(g)$. Observe that $\deg(f) = \deg(g)\deg(h)$.

A classical family of invertible skew-products are maps $f:\T^2\to \T^2$ of the form $f(x,t)=(x+\alpha, t + \phi(x))$, where $\alpha\in \R$ is an irrational number and $\phi:\T^1\to \T^1$ is a continuous map. If the cohomological equation
\[u(x+\alpha) - u(x)= \phi(x),
\]
has a continuous solution, then $f$ is conjugated to $f_0(x,t)=(x+\alpha, t)$ and therefore it is not transitive. This is an example of a non transitive skew-product where $\deg(f) = \deg(h) = \deg(g) = 1$. Similar examples can be constructed by replacing the base map $x \mapsto x+\alpha$ by $x \mapsto kx \mod 1$. In this case we have $\deg(f) = \deg(h) = |k|$ and $\deg(g)=1$.

We would like to address now what happens when $\deg(g) \geq 2$. Observe that if $f=(h,g)$ preserves the Haar measure on $\T^n$, then $h$ preserves the Haar measure on $\T^{n-1}$. Moreover, if $f$ is transitive, so is $h$.

In order to announce the main theorem of this article we will need to define a linear map associated to a torus endomorphism.  Given $f:M\to M$, let $f_\htag:\pi_1(M)\to \pi_1(M)$ be the induced morphism on the fundamental group $\pi_1(M)$ of $M$. If we take $M=\T^n$, then $\pi_1(\T^n)$ is isomorphic to $\Z^n$ and we can represent $f_\htag $  by a linear matrix $A_f\in M_n(\Z)$.  We shall often refer to the matrix $A_f$ (or the maps it defines on $\R^n$ and $\T^n$) as the \emph{linear part of $f$}.
Observe that if $f$ is a skew-product, we have $f(\{x\}\times \T^1) = \{h(x)\}\times \T^1$ for every $x\in\T^{n-1}$. Therefore the vector $e_n = (0,\dots,0,1)$ is an eigenvector of $A_f$. It is not hard to see that the eigenvalue associated to $e_n$ is either $\deg(g)$ or $-\deg(g)$. For $A_f$, we consider  its Jordan normal form $J$ and  the Jordan block $J_n$ associated to the eigenvector $e_n$. 
 
The main Theorem is the following:

\begin{teoprin}\label{SP2}
Let $f:\T^n\to \T^n$ be a skew-product endomorphism of codimension 1 of the form $f=(h,g)$, with $h: \T^{n-1} \to \T^{n-1}$ transitive and $\deg(g) \geq 2$. If $f$ is volume-preserving and $\dim(J_n)=1$, then $f$ is transitive. 
\end{teoprin}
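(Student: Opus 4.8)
The plan is to establish topological transitivity in the equivalent form: for every pair of nonempty open sets $U,V\subset\T^n$ there exists $n\ge 0$ with $f^n(U)\cap V\neq\emptyset$ (on the torus this is equivalent to the existence of a dense orbit). Fix lifts $F=(H,G)$ of $f$ to $\R^n$, where $H$ lifts $h$ and $G$ lifts $g$. The engine of the argument is the winding identity $G^{(n)}(x,t+1)=G^{(n)}(x,t)+\deg(g)^n$, where $G^{(n)}(x,\cdot)$ is the fibre coordinate of $F^n$ over a fixed base point; since $\deg(g)\ge 2$ this records that $f^n$ wraps each vertical fibre around itself $\deg(g)^n\to\infty$ times. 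Because $h$ is transitive, the set of base points with dense forward $h$-orbit is residual, so the base coordinate of an orbit can be steered into any prescribed open set; the entire difficulty is therefore concentrated in the fibre direction.

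I would first isolate what $\dim(J_n)=1$ provides. Writing the linear part in block form as $\left(\begin{smallmatrix}A_h&0\\ w^{\top}&\pm\deg(g)\end{smallmatrix}\right)$, the hypothesis $\dim(J_n)=1$ is exactly the statement that the eigenline $\R e_n$ admits an $A_f$-invariant complement, equivalently that the linear cohomological equation $\ell\,(A_h\mp\deg(g)I)=w^{\top}$ is solvable. Geometrically this is an invariant ``linear section'' transverse to the fibres, and it lets me normalize coordinates (after a homeomorphism homotopic to the identity, possibly on a finite cover) so that the base loops carry no net vertical winding. The point of this normalization is that the vertical displacement cocycle then grows \emph{coherently}, governed solely by $\deg(g)$, with no cancellation fed in by the base dynamics; this is the precise way the hypothesis prevents a destructive resonance between fibre expansion and the action of $h$.

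The heart of the proof is a rigidity statement for invariant sets, which I would approach by contradiction. If $f$ is not transitive, choose $U,V$ with $f^n(U)\cap V=\emptyset$ for all $n\ge 0$ and set $\Omega=\overline{\bigcup_{n\ge 0}f^n(U)}$. Then $\Omega$ is closed, forward invariant ($f(\Omega)\subseteq\Omega$), has nonempty interior (it contains $U$), and is proper (it misses $V$), so $0<\mu(\Omega)<1$. Volume preservation upgrades this to invariance modulo null sets: from $\Omega\subseteq f^{-1}(\Omega)$ and $\mu(f^{-1}(\Omega))=\mu(\Omega)$ we get $\chi_\Omega\circ f=\chi_\Omega$ almost everywhere. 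Writing $K_x=\{t:(x,t)\in\Omega\}$ for the fibre slices, this reads $K_x=g_x^{-1}(K_{h(x)})$ for almost every $x$, exhibiting $\{K_x\}$ as a nontrivial $g$-saturated invariant family of fibre sets. The goal is then to show that such a family with $0<\mu(\Omega)<1$ cannot exist.

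The main obstacle is precisely this last rigidity step, and it is where the hypotheses must be combined, because the fibre maps $g_x$ are merely continuous degree-$\deg(g)$ covers with no smoothness or uniform expansion, so a single arc may fold and fail to grow in one iterate. I would lift the saturation relation $K_x=g_x^{-1}(K_{h(x)})$ to the universal cover and feed in the coherent winding from the $\dim(J_n)=1$ normalization: because $\Omega$ has nonempty interior, some $K_y$ contains a genuine arc, and the degree-$\deg(g)\ge 2$ winding, propagated through the saturation relation along a dense $h$-orbit, forces the fibre measures $|K_y|$ to increase to $1$ on a dense set of base points. Volume preservation then gives $\mu(\Omega)=1$, the desired contradiction. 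Carrying out this propagation rigorously—controlling the non-monotone fibre images via the untwisted degree-$\ge 2$ cocycle rather than via pointwise expansion—is the crux of the argument; once it is in place, transitivity follows, and the counterexamples expected when $\dim(J_n)\ge 2$ indicate that coherence of the winding is genuinely indispensable.
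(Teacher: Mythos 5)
Your reduction to the non-existence of a saturated family $K_x=g_x^{-1}(K_{h(x)})$ with $0<\mu(\Omega)<1$ is a reasonable starting point, but the step you yourself identify as the crux --- that the degree-$\deg(g)\ge 2$ winding, propagated along a dense $h$-orbit, forces the fibre measures $|K_y|$ to grow to $1$ --- is not carried out, and the mechanism you propose for it fails. When $\deg(h)\ge 2$, volume preservation of $f$ does \emph{not} force the individual fibre maps $g_x$ to expand: Theorem \ref{Examp} of the paper constructs volume-preserving skew-products satisfying every hypothesis of Theorem \ref{SP2} in which $g_{x_0}$ is uniformly contracting on an interval at a fixed point $x_0$ of $h$. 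Iterating your saturation relation through such a point shrinks fibre arcs rather than growing them, so no monotone growth of $|K_y|$ along orbits is available; the degree-$\ge 2$ winding only guarantees that an arc of vertical length at least $1$ covers the whole fibre, which is useless for the short arcs you start from. This is exactly why the paper abandons the fibrewise measure-growth argument (which does work, and is Theorem \ref{SP1}, when $\deg(h)=1$) in favour of a global homotopical one.

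A second, related gap is your use of $\dim(J_n)=1$. It gives an $A_f$-invariant hyperplane $P_0$ complementary to $\R e_n$ for the \emph{linear part} only; the nonlinear lift $\hf$ is merely within bounded distance of $A_f$, and one cannot conjugate $f$ by a homeomorphism homotopic to the identity so that ``base loops carry no net vertical winding'' --- solving $\ell(A_h\mp\deg(g)I)=w^{\top}$ untwists $A_f$, not $f$. The paper instead uses $P_0$ to build a genuinely nonlinear invariant set $S=\bigcap_k\hf^{-k}([P_0+k_1e_n,P_0+k_2e_n])$ with empty interior, semi-conjugate to $\hat h$, which every loop transverse to $P_0$ must meet (Lemma \ref{PropS}). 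The information that the invariant regions actually contain such transverse loops does not come from $\Omega$ having interior (that only gives an arc in a fibre); it comes from the fundamental-group computation $|\det(A_{f|\langle i_\htag(\pi_1(U))\rangle})|=|\det(A_f)|$ (Lemma \ref{LemPrin}) combined with the divisibility constraint of Proposition \ref{PropDet}. Some substitute for this global input is indispensable, and your proposal contains none.
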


We emphasize that our result is purely topological, this is, it does not rely on any $C^r$ regularity of the maps $f$, $h$ and $g$ and, in particular, does not make use of any hyperbolic structure. Neither do our proofs require the density (not even the existence) of periodic points.

Before  discussing the hypothesis $\dim(J_n)=1$ in more detail we would like to point out some particular cases in which it holds. Suppose that $f=(h,g)$ is a volume-preserving skew-product with $h$ transitive, $\deg(g)\geq 2$ and $\deg(h)=1$. The hypothesis on the degree of $h$ means that it is a homeomorphism, and by a previous observation, a volume-preserving homeomorphism. It is not hard to see then, that at least in the $C^1$ case, each of the maps $g_x: \T^1 \to \T^1$ is uniformly expanding. (In the $C^0$ case a variant of uniform expansion occurs.) Assuming transitivity of $h$ allows us to easily conclude that $f$ itself is transitive.  This proof is unrelated to (and indeed much easier than) the proof of Theorem \ref{SP2}. We therefore state it separately:

\begin{teoprin}\label{SP1} 

Let $f:\T^n\to \T^n$ be a skew-product of codimension 1 of the form $f=(h,g)$. If $f$ is a volume-preserving endomorphism, $h$ is a transitive homeomorphism, and \linebreak $|\deg(g)|\geq 2$, then $f$ is transitive. 
\end{teoprin}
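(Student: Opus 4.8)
The plan is to verify the open-set formulation of transitivity (equivalent, on the compact space $\T^n$ without isolated points, to the existence of a dense orbit): for every pair of nonempty open sets $U,W\subseteq\T^n$ there is some $m\geq 0$ with $f^m(U)\cap W\neq\emptyset$. Since products of open arcs form a basis, I may assume $U=U_1\times I$ and $W=W_1\times J$ with $U_1,W_1\subseteq\T^{n-1}$ open and $I,J\subseteq\T^1$ open arcs. Writing $G_x^m:=g_{h^{m-1}(x)}\circ\cdots\circ g_{h(x)}\circ g_x$ for the composition of fibre maps along the $h$-orbit of $x$, one has $f^m(x,t)=(h^m(x),G_x^m(t))$, and each $G_x^m$ is a degree $\deg(g)^m$ covering of $\T^1$. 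The argument splits into ``expand the fibre'' and ``match the base''.

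First I would record the measure-theoretic input. Since $h$ is a homeomorphism preserving the Haar measure $\mu_{n-1}$ on $\T^{n-1}$ (as noted in the introduction), Fubini gives, for measurable $A\subseteq\T^{n-1}$ and $C\subseteq\T^1$,
\[
\mu\big(f^{-1}(A\times C)\big)=\int_{h^{-1}(A)}\mathrm{Leb}\big(g_x^{-1}(C)\big)\,d\mu_{n-1}(x),
\]
and comparing with $\mu(A\times C)=\mu_{n-1}(A)\,\mathrm{Leb}(C)$ (using that $h$ preserves $\mu_{n-1}$) forces $\mathrm{Leb}(g_x^{-1}(C))=\mathrm{Leb}(C)$ for a.e.\ $x$ and every $C$; continuity of $g$ then extends this to every $x$. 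Thus each $g_x$ preserves Lebesgue measure on $\T^1$. The decisive consequence is the expansion inequality
\[
\mathrm{Leb}\big(g_x(A)\big)\geq \mathrm{Leb}(A)\qquad\text{for every Borel }A\subseteq\T^1,
\]
which holds because $A\subseteq g_x^{-1}(g_x(A))$ and $g_x$ preserves measure; moreover it is strict whenever $g_x(A)$ is a proper sub-arc, since the remaining $\deg(g)-1\geq 1$ preimage branches contribute positive length. (In the $C^1$ category this is transparent: Lebesgue-invariance reads $\sum_{g_x(s)=t}|G_x'(s)|^{-1}=1$, and since there are $\deg(g)\geq 2$ positive summands each satisfies $|G_x'(s)|>1$, so $g_x$ is uniformly expanding. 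Our setting is only topological, so the inequality above plays that role instead.)

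Next comes the fibre-expansion step, which I expect to be the main obstacle. The inequality rules out any contraction, hence any attracting (or one-sided neutral) behaviour of the fibre maps; consequently the images $G_x^m(I)$ are arcs of non-decreasing length that grow strictly until they equal $\T^1$. The goal of this step is to produce a single $x_0\in U_1$ and an $m\geq 1$ with $G_{x_0}^m(I)=\T^1$. Granting this, the condition ``$G_x^m(I)=\T^1$'' is open in $x$ (covering the compact circle by the image of a slightly smaller closed sub-arc persists under the uniform continuity of $(x,t)\mapsto G_x^m(t)$), so it holds on a nonempty open $V\subseteq U_1$; since $h$ is a homeomorphism, $h^m(V)$ is open and nonempty and
\[
f^m(U)\ \supseteq\ f^m(V\times I)\ =\ h^m(V)\times\T^1.
\]
The delicate point is to guarantee eventual covering of the fibre and, crucially, to do so with enough uniformity in $x$ to locate such an $x_0$; this is where the compactness of $\T^{n-1}$ and the continuity of the family $\{g_x\}$ must be combined with the expansion inequality, replacing the uniform-expansion estimate available in the $C^1$ case.

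Finally, the base-matching step is soft. Set $\Omega:=h^m(V)$, a nonempty open subset of $\T^{n-1}$. Because every $g_x$ is surjective, $G_x^j(\T^1)=\T^1$ for all $j$, so applying $f^j$ to $\Omega\times\T^1$ gives $f^{m+j}(U)\supseteq h^j(\Omega)\times\T^1$. By transitivity of $h$ there is $j\geq 0$ with $h^j(\Omega)\cap W_1\neq\emptyset$, whence
\[
f^{m+j}(U)\cap W\ \supseteq\ \big(h^j(\Omega)\cap W_1\big)\times J\ \neq\ \emptyset.
\]
This establishes the open-set condition and hence transitivity of $f$. In summary, everything reduces to the topological expansion of fibre arcs in the second step; the homeomorphism and transitivity hypotheses on $h$ enter only to pass each $g_x$ to a measure-preserving map, to keep $h^m(V)$ open, and to realise the base return.
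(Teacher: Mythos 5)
Your route is the same as the paper's: show that each $g_x$ preserves Lebesgue measure on the fibre, use this together with $|\deg(g)|\ge 2$ to force the iterated fibre images to cover $\T^1$, then finish with transitivity of $h$ in the base. The measure-theoretic input and the base-matching step are fine. But the step you yourself flag as ``the main obstacle'' --- producing $m$ with $G_{x_0}^m(I)=\T^1$ --- is precisely the content of the theorem in the $C^0$ setting, and you do not prove it. The strict inequality $\lambda\bigl(g_x(J)\bigr)>\lambda(J)$ for proper arcs only gives that the lengths $\ell_m=\lambda\bigl(G_{x_0}^m(I)\bigr)$ are strictly increasing; a strictly increasing bounded sequence can converge to a limit smaller than $1$, so ``grow strictly until they equal $\T^1$'' does not follow from what you have written. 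The missing ingredient, which the paper supplies, is a uniform increment: by compactness, for every $K>0$ there is $\delta>0$ such that whenever $\lambda(J)\ge K$ and $g_x$ is injective on the arc $J$, one has $\lambda\bigl(g_x(J)\bigr)\ge\lambda(J)+\delta$. This follows because $(x,J)\mapsto\lambda\bigl(g_x(J)\bigr)-\lambda(J)$ is continuous and strictly positive on the compact set of pairs with $\lambda(J)\ge K$ (and if $g_x$ fails to be injective on an arc $J$, then $g_x(J)=\T^1$ already, so the dichotomy is clean). Induction along the $h$-orbit then gives $\ell_m\ge\ell_0+m\delta$ until the fibre is covered, which happens in a number of steps depending only on $\lambda(I)$.

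A secondary point: your concern about ``uniformity in $x$ to locate $x_0$'' is slightly misplaced. The compactness needed is over all of $\T^{n-1}$, since the $h$-orbit of $x_0$ wanders through the whole base, not just over $U_1$; and once the uniform $\delta$ is in hand, the covering time is the same for every $x$, so the separate openness-in-$x$ argument you sketch (while harmless) is not needed --- this is exactly how the paper obtains $f^k(\{x\}\times I)=\{h^k(x)\}\times\T^1$ for all $x$ simultaneously.
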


At a first glance, one could imagine that it would be easier to obtain transitivity in the case where $\deg(h)\geq 2$, due to the extra complexity coming from the base. But that is not the case, because when $\deg(h)\geq 2$, the condition of $f$ being volume-preserving does not imply that the  $g_x$  have to be uniformly expanding. 

\pagebreak

\begin{teoprin}\label{Examp}
Given $n \geq 2$, and $k\geq 2$ there exists a volume-preserving skew-product endomorphism $f:\T^n \to \T^n$ of codimension 1 of the form $f=(h,g)$ with $\deg(h)\geq 2$ and $\deg(g) = k$ such that
\begin{itemize}
\item $h$ is transitive,
\item there are a fixed point $x_0$ of $h$, and an interval $I \subset \T^1$ such that $g_{x_0}$ is uniformly contracting on $I$,
\item the linear part of $f$ is given by the matrix
\[A_f= \left( \begin{array}{cc} 
2  Id & \begin{array}{c}0\\ \vdots \\ 0 \end{array} \\
\begin{array}{ccc}1 & \dots & 1\end{array}  & k 
\end{array}\right),\]
where $k$ is the eigenvalue associated to $e_n$.
\end{itemize}
\end{teoprin}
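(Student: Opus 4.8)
The plan is to exhibit an explicit example. For the base I would simply take the doubling map $h(x) = 2x \bmod 1$ on $\T^{n-1}$: it is linear and expanding, hence transitive (indeed ergodic) for Haar measure, it fixes $x_0 = 0$, its Jacobian $\det Dh \equiv 2^{n-1}$ is constant, and $\deg(h) = 2^{n-1} \geq 2$. This already produces the block $2\,\mathrm{Id}$. I would then look for $f$ of the form
\[ f(x,t) = \bigl(2x,\; x_1 + \cdots + x_{n-1} + G(t)\bigr) \bmod \Z^n, \]
where $G : \T^1 \to \T^1$ is a single degree-$k$ covering map of the circle, used (up to translation) over every base point. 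The affine term $\lambda(x) = x_1 + \cdots + x_{n-1}$ contributes exactly the row $(1,\dots,1)$ to the bottom of $A_f$ and the fiber map contributes the eigenvalue $k$ for $e_n$, so $A_f$ has the prescribed shape; moreover $\deg(g) = \deg(G) = k$, and $f$ is a local diffeomorphism (its Jacobian is $2^{n-1} G'(t) \neq 0$), hence a genuine endomorphism. Over the fixed point $x_0 = 0$ the affine term vanishes, so $g_{x_0} = G$, and the whole burden of producing a contracting interval falls on $G$.

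Next I would reduce volume preservation to a single condition on $G$. Using the transfer-operator characterization (an endomorphism preserves Lebesgue iff $\sum_{w \in f^{-1}(z)} |\det Df(w)|^{-1} = 1$ for all $z$) together with $\det Df = 2^{n-1} G'(t)$, the preimages of a point $(y,s)$ are indexed by the $2^{n-1}$ solutions of $2x = y$, written $x+v$ with $v$ ranging over $\tfrac12\Z^{n-1}/\Z^{n-1}$, and, for each, the $k$ solutions of $G(t) = s - \lambda(x+v)$. Writing $\rho_G(s) = \sum_{G(t)=s} 1/|G'(t)|$ for the push-forward density of $G$, the sum collapses to the average over $v$ of $\rho_G\bigl(s - \lambda(x) - \sigma(v)\bigr)$, where $\sigma(v) = \sum_i v_i \in \{0, \tfrac12\}$. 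Exactly half of the $v$ give $\sigma(v) = 0$ and half give $\sigma(v) = \tfrac12$, so the expression equals $\tfrac12\bigl(\rho_G(s') + \rho_G(s' + \tfrac12)\bigr)$ with $s' = s - \lambda(x)$. Hence $f$ is volume-preserving precisely when
\[ \rho_G(s) + \rho_G\bigl(s + \tfrac12\bigr) = 2 \qquad \text{for all } s \in \T^1. \]
This is exactly the phenomenon advertised before the statement: the individual fiber maps need not preserve Lebesgue (here they do not, since $G$ contracts somewhere); only their average over base-preimages does.

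Finally I would construct $G$. Since an endomorphism of $\T^1$ is a covering map, $G$ is a local homeomorphism, so its lift $\tilde G$ is strictly increasing with $\tilde G(t+1) = \tilde G(t) + k$; writing $\beta = \tilde G^{-1}$ and $b = \beta' > 0$ (which is $k$-periodic), the density is $\rho_G(s) = \sum_{j=0}^{k-1} b(s+j)$. The pairing condition becomes the single linear relation $\sum_{j=0}^{k-1}\bigl(b(s+j) + b(s + j + \tfrac12)\bigr) = 2$, which one checks automatically forces $\int_0^k b = 1$. I then have ample freedom: $G$ contracts exactly where $b > 1$, so I would set $b \equiv c$ for some $c \in (1,2)$ on a short interval $J_0$ — giving $\tilde G' = 1/c < 1$, i.e. a uniformly contracting branch on $I = \beta(J_0)$ — make $b$ small on the translates $J_0 + \tfrac12, J_0 + 1, \dots$ appearing in the relation, solve the relation for the remaining value, and fill in the rest positively and continuously. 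The main obstacle, and the reason the choice needs care, is the tension built into the pairing: positivity forces $\rho_G < 2$ everywhere, whereas a contracting branch of slope $\delta$ contributes $1/\delta > 1$ to the density over its image. These are reconcilable only if the contraction is mild ($\delta \in (\tfrac12,1)$) and confined to a short interval whose half-period and integer translates can absorb the compensating mass while staying positive — which is precisely what the choice of $b$ above arranges. With such a $G$ in hand, transitivity of $h$, the contracting interval over $x_0$, the degrees, and the prescribed $A_f$ all follow at once.
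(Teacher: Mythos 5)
Your proposal is correct and follows essentially the same route as the paper: the same base map $2\cdot\mathrm{Id}$, the same coupling term $x_1+\cdots+x_{n-1}$ whose key feature is that the $2^{n-1}$ base-preimages split evenly into two classes differing by a half-translation along the fiber, and the same reduction of volume preservation to the condition $\rho_G(s)+\rho_G(s+\tfrac12)=2$ (the paper's explicit piecewise-linear $\phi$ with slopes $\lambda\in(\tfrac12,1)$ and $\eta=\tfrac{(2k-1)\lambda}{2\lambda-1}$ is exactly one solution of your functional equation, and your observation that positivity forces the contraction rate into $(\tfrac12,1)$ matches the paper's constraint on $\lambda$). The only difference is packaging: the paper writes down one explicit two-slope map and verifies a single scalar identity, while you derive the general density condition and argue for the existence of a suitable $G$ within a larger family.
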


Note that if we take $k>2$, then the form of the linear part of $f$ in Theorem \ref{Examp} implies that $f$ satisfies the hypothesis of Theorem \ref{SP2} and is therefore transitive.

Theorem \ref{Examp} suggests that, in order to deal with transitivity in the case where $|\deg(h)|\geq 2$, one has to adopt global arguments that make use of the way that $f$ wraps curves around the manifold rather than localized behavior such as expansion or contraction near a given point.

 Let us now give some examples where the hypothesis $\dim(J_n)=1$ holds, obtaining some corollaries of Theorem \ref{SP2}:

\begin{coroprin}\label{CorDom1}
Let $f:\T^n\to \T^n$ be a skew-product of codimension 1 with the form $f=(h,g)$. Suppose that $f$ is a volume-preserving endomorphism, $h$ is a transitive endomorphism, and $1\leq \deg(h)<\deg(g)$. Then, $f$ is transitive. 
\end{coroprin}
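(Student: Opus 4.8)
The plan is to deduce Corollary~\ref{CorDom1} from Theorem~\ref{SP2}, so the entire task reduces to verifying the single hypothesis that distinguishes them: that the Jordan block $J_n$ associated to the eigenvector $e_n$ is one-dimensional. All the other hypotheses of Theorem~\ref{SP2} are immediate, since $h$ is assumed transitive and $\deg(g) > \deg(h) \geq 1$ forces $\deg(g) \geq 2$. So I would open by stating that it suffices to show $\dim(J_n)=1$, and then spend the proof on that linear-algebra fact.

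First I would record what is already established in the excerpt about the structure of $A_f$. Because $f$ is a codimension-one skew-product, $f(\{x\}\times\T^1)=\{h(x)\}\times\T^1$, so $e_n$ is an eigenvector of $A_f$ with eigenvalue $\pm\deg(g)$; call it $\lambda_n$, with $|\lambda_n|=\deg(g)$. Moreover the block structure of $A_f$ is that of a skew-product: in the basis where the first $n-1$ coordinates correspond to the base $\T^{n-1}$ and the last to the fiber, $A_f$ has the block form $\left(\begin{smallmatrix} A_h & 0 \\ v^{\mathsf{T}} & \lambda_n \end{smallmatrix}\right)$, where $A_h$ is the linear part of $h$ and $v\in\Z^{n-1}$. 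The point is that $\lambda_n$ is an eigenvalue of $A_f$ whose only other possible source in the spectrum is the spectrum of $A_h$, the linear part of the base map $h$.

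The key step is to show that $\dim(J_n)=1$, which amounts to showing that the block of $J$ attached to the eigenvalue $\lambda_n$ along the direction $e_n$ does not grow beyond size one. The mechanism for this is a degree/eigenvalue inequality coming from the hypothesis $\deg(h)<\deg(g)$. Since $h$ is a transitive conservative toral endomorphism, $A_h$ is an integer matrix with $|\det A_h|=\deg(h)$, and every eigenvalue of $A_h$ has modulus at most... here is where I must be careful: I would use that the product of the moduli of the eigenvalues of $A_h$ equals $|\det A_h| = \deg(h)$, together with the fact that a transitive (hence, by Haar-preservation, conservative) endomorphism cannot have an eigenvalue of modulus exceeding a bound controlled by $\deg(h)$. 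The cleaner route is to argue that $\lambda_n$ cannot be an eigenvalue of $A_h$ at all: the spectral radius of $A_h$ is at most $\deg(h)$ (indeed for an integer matrix with $|\det|=\deg(h)$ whose associated endomorphism is transitive, no eigenvalue can have modulus $\geq\deg(h)+1$, and in fact $|\lambda|<\deg(g)$ whenever $\deg(h)<\deg(g)$). Consequently $\lambda_n$, of modulus $\deg(g)>\deg(h)$, lies outside the spectrum of $A_h$, so it is a \emph{simple} eigenvalue of the full matrix $A_f$: its only eigenvector direction is $e_n$ and there is no Jordan chain above it. This gives $\dim(J_n)=1$.

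The main obstacle I anticipate is precisely the spectral bound on $A_h$: I need a clean statement that a transitive conservative toral endomorphism of degree $\deg(h)$ has spectral radius strictly below $\deg(g)$ whenever $\deg(h)<\deg(g)$. For a conservative endomorphism, $|\det A_h|=\deg(h)$, but this alone bounds only the product of eigenvalue moduli, not the largest one, so individual eigenvalues could a priori be large while others are small. I would therefore look to rule out a Jordan block either by (i) invoking that transitivity forbids eigenvalues of modulus one on $A_h$ combined with the determinant constraint to bound the top eigenvalue, or (ii) arguing directly on the integer matrix $A_f$ that any generalized eigenvector for $\lambda_n$ beyond $e_n$ would project to a generalized eigenvector of $A_h$ for $\lambda_n$, which is impossible once $|\lambda_n|=\deg(g)$ exceeds the spectral radius of $A_h$. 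Pinning down the correct spectral-radius estimate for $A_h$ under the transitivity-plus-conservativity hypotheses is the crux; once it is in hand, the reduction to Theorem~\ref{SP2} is immediate.
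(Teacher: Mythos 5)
Your reduction is the right one, and it is the same as the paper's: all hypotheses of Theorem~\ref{SP2} except $\dim(J_n)=1$ are immediate, and since $\chi_{A_f}(t)=\pm\chi_{A_h}(t)\,(t-\lambda_n)$ with $\lambda_n=\pm\deg(g)$, it suffices to show that $\lambda_n$ is not an eigenvalue of $A_h$, which makes it a simple eigenvalue of $A_f$ and forces $\dim(J_n)=1$. The gap is in the mechanism you propose for that last step. The spectral bound you want --- that a transitive conservative toral endomorphism with $\deg(h)<\deg(g)$ has spectral radius of $A_h$ strictly below $\deg(g)$ (or below $\deg(h)+1$) --- is simply false. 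Take $A_h=\left(\begin{smallmatrix}2 & 1\\ 1 & 1\end{smallmatrix}\right)$: it is Anosov, hence a transitive Haar-preserving homeomorphism with $\deg(h)=|\det A_h|=1$, yet its spectral radius is $(3+\sqrt{5})/2\approx 2.618$. With $\deg(g)=2$ this satisfies $\deg(h)<\deg(g)$ while violating your claimed bound, so both routes (i) and (ii) of your final paragraph collapse. The determinant only controls the \emph{product} of the eigenvalue moduli, exactly as you feared.

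What actually closes the argument is arithmetic, not spectral: $\lambda_n=\pm\deg(g)$ is an \emph{integer}, and an integer (indeed any rational) eigenvalue $\lambda$ of an integer matrix $A_h$ spans a one-dimensional invariant subspace $S$ with $\det(A_{h|S})=\lambda\in\Z$, so by Proposition~\ref{PropDet} (whose one-dimensional case is Lemma~\ref{LemEig}, the rational root theorem) $\lambda$ divides $\det(A_h)$ and hence $|\lambda|\leq|\det(A_h)|=\deg(h)<\deg(g)$. Thus $\pm\deg(g)$ cannot be an eigenvalue of $A_h$, even though irrational eigenvalues of $A_h$ of modulus larger than $\deg(g)$ may well exist (as in the example above --- note its eigenvalues are irrational, which is why the corollary survives). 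This divisibility step is exactly what the paper's proof of Corollary~\ref{CorDom1} uses, and it is the piece missing from your proposal.
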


This implies that Theorem \ref{SP1} is really a corollary of Theorem \ref{SP2}. The result is also true for another type of domination:

\begin{coroprin}\label{CorDom2}
Let $f:\T^n\to \T^n$ be a skew-product of codimension 1 with the form $f=(h,g)$. If $f$ is a volume-preserving endomorphism, $h$ is a transitive endomorphism, $\deg(g)>1$ and $|A_h v|> \deg(g) |v|$ $\forall v\in\R^n -\{0\}$, then $f$ is transitive. 
\end{coroprin}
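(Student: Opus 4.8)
The plan is to realize Corollary \ref{CorDom2} as a consequence of Theorem \ref{SP2}. Since $h$ is transitive, $f$ is volume-preserving and $\deg(g)>1$ (hence $\deg(g)\geq 2$), the only hypothesis of Theorem \ref{SP2} left to verify is $\dim(J_n)=1$, and I would extract this purely from the domination condition $|A_h v|>\deg(g)|v|$. Thus the corollary reduces to a short linear-algebra statement about the linear part $A_f$.

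First I would write $A_f$ in block form. Because $f(\{x\}\times\T^1)=\{h(x)\}\times\T^1$, the line $\R e_n$ is invariant and $e_n$ is an eigenvector with eigenvalue $\lambda$, where, as noted in the excerpt, $\lambda=\pm\deg(g)$ and so $|\lambda|=\deg(g)$. This forces
\[ A_f=\begin{pmatrix} A_h & 0 \\ w^{T} & \lambda \end{pmatrix}, \]
with $A_h$ the linear part of $h$ and $w$ an integer vector. Being block triangular, $A_f$ has characteristic polynomial $\det(sI-A_h)\,(s-\lambda)$, so that the spectrum of $A_f$ is that of $A_h$ together with $\lambda$, the algebraic multiplicity of $\lambda$ in $A_f$ being one plus its multiplicity in $A_h$.

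The key step is to rule out $\lambda$ as an eigenvalue of $A_h$. Here $\lambda=\pm\deg(g)$ is \emph{real} with $|\lambda|=\deg(g)$; if it were an eigenvalue of $A_h$ there would be a real $v\neq 0$ with $A_h v=\lambda v$, giving $|A_h v|=\deg(g)|v|$ and contradicting the strict inequality $|A_h v|>\deg(g)|v|$. Hence $\lambda$ is not an eigenvalue of $A_h$, and by the previous paragraph $\lambda$ is therefore a \emph{simple} root of the characteristic polynomial of $A_f$. A simple eigenvalue has a $1\times 1$ Jordan block; since $e_n$ is an eigenvector for $\lambda$, this block is precisely $J_n$, whence $\dim(J_n)=1$. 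Equivalently, one checks directly that $e_n\notin\Ima(A_f-\lambda I)$: solving $(A_f-\lambda I)(u',u_n)^{T}=e_n$ requires $(A_h-\lambda I)u'=0$, and invertibility of $A_h-\lambda I$ forces $u'=0$, so the last coordinate $w^{T}u'$ cannot equal $1$; thus $e_n$ does not sit at the bottom of a longer Jordan chain. Theorem \ref{SP2} then applies and $f$ is transitive.

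I expect no substantial obstacle: the argument is elementary linear algebra and the single point deserving care is identifying exactly which feature of the domination hypothesis is needed. In fact the full strength $|A_h v|>\deg(g)|v|$ for all $v$ is far more than required — the sole consequence used is that the real number $\pm\deg(g)$ is excluded from the spectrum of $A_h$, which already forces $\dim(J_n)=1$.
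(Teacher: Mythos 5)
Your proposal is correct and follows essentially the same route as the paper: block-triangularize $A_f$, factor the characteristic polynomial as $\chi_{A_h}(t)(t\mp\deg(g))$ up to sign, use the strict domination inequality to exclude $\pm\deg(g)$ from the spectrum of $A_h$ so that it is a simple eigenvalue of $A_f$ with $\dim(J_n)=1$, and invoke Theorem \ref{SP2}. Your closing remark that only the exclusion of $\pm\deg(g)$ from the spectrum of $A_h$ is actually needed matches the paper's own observation, since its proof of Corollary \ref{CorDom1} rests on exactly that weaker fact.
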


Finally if $A_f$ is diagonalizable, then all the Jordan blocks have dimension $1$ and therefore we have:

\begin{coroprin}
Let $f:\T^n\to \T^n$ be a skew-product of codimension 1 with the form $f=(h,g)$. If $f$ is a volume-preserving endomorphism, $h$ is a transitive endomorphism, $\deg(g)>1$,  and $A_f$ is diagonalizable, then $f$ is transitive. 
\end{coroprin}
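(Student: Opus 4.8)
The plan is to derive this final corollary as a direct consequence of Theorem \ref{SP2}. The corollary adds the hypothesis that $A_f$ is diagonalizable, so the entire task reduces to verifying that diagonalizability forces $\dim(J_n)=1$, and then invoking the main theorem verbatim. Let me sketch how I would argue this.

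First I would recall the setup from the excerpt: $e_n=(0,\dots,0,1)$ is an eigenvector of $A_f$ with eigenvalue $\pm\deg(g)$, and $J_n$ denotes the Jordan block of the Jordan normal form $J$ of $A_f$ associated to this eigenvector. The key linear-algebra fact I would use is that a matrix is diagonalizable over $\C$ if and only if its Jordan normal form is diagonal, which happens precisely when every Jordan block has dimension $1$. In particular, if $A_f$ is diagonalizable then $\dim(J_n)=1$ automatically, since $J_n$ is one of the blocks of $J$.

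\begin{proof}
Since $A_f$ is diagonalizable, its Jordan normal form $J$ is a diagonal matrix; equivalently, every Jordan block of $J$ has dimension $1$. In particular the block $J_n$ associated to the eigenvector $e_n$ satisfies $\dim(J_n)=1$. As noted earlier, $e_n$ is an eigenvector of $A_f$ with eigenvalue $\pm\deg(g)$, so this block is well defined. The remaining hypotheses of Theorem \ref{SP2} — that $f$ is a volume-preserving skew-product of codimension $1$ of the form $f=(h,g)$ with $h$ transitive and $\deg(g)\geq 2$ — are assumed directly in the statement (note that $\deg(g)>1$ means $\deg(g)\geq 2$ since $\deg(g)$ is a positive integer, and a transitive endomorphism is in particular an endomorphism satisfying the required hypothesis). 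Therefore Theorem \ref{SP2} applies and $f$ is transitive.
\end{proof}

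I do not anticipate any genuine obstacle here, as this corollary is essentially a packaging of Theorem \ref{SP2} under a convenient sufficient condition. The only point deserving a moment's care is the identification between diagonalizability and all Jordan blocks being one-dimensional; I would make sure this is stated over the correct field ($\C$, since the eigenvalues and Jordan form are considered there) and note that $e_n$ being a genuine eigenvector guarantees $J_n$ is honestly one of the blocks rather than a degenerate object. Beyond that, the proof is immediate.
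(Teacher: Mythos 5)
Your argument is exactly the one the paper uses: diagonalizability of $A_f$ means every Jordan block is one-dimensional, so in particular $\dim(J_n)=1$, and Theorem \ref{SP2} applies directly. The proposal is correct and takes essentially the same (one-line) approach as the paper.
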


The examples built in Theorem \ref{Examp} can verify the hypothesis of Theorem \ref{SP2} or the previous corollaries and therefore they would still be transitive. 

Let us give a sketch of the proof of Theorem \ref{SP2}:

We call an invariant region an open set which verifies $f^{-1}(U)= U$. If $f$ is a volume-preserving endomorphism, the lack of transitivity is equivalent to the existence of more than one invariant region (Check Proposition \ref{NotTran}). We start by studying the structure of the fundamental group of such invariant regions.
Our starting point is the set of  techniques used in \cite{An15}, where the first author proved that, given  a volume-preserving toral endomorphism $f:\T^2 \to \T^2$,  with $|\deg(f)|\geq 2$,  such that $A_f$ is hyperbolic, then $f$ is transitive. 

Using non-invertibility and the hypothesis  that $f$ is conservative one can prove that, if $i:U\to \T^n$ denotes the inclusion, then $i_\htag(\pi_1(U))$ is not trivial.  This is the main use we give to the volume-preserving hypothesis and the same results could be obtained by switching for the hypothesis $\Omega(f) = \T^n$ (where $\Omega(f)$ is the non-wandering set) as it is done by Ranter in \cite{Ra16}.  Our next step is to conclude that, not only is $i_\htag(\pi_1(U))$ non-trivial, but it has to be big enough such that the action of $f_{\htag|i_\htag(\pi_1(U))}$ has the same ``degree'' as $f_\htag$. After that, we take a lift from $f$ and using the skew-product structure  we construct an invariant hypersurface $S$. This hypersurface is obtained by the expansiveness of the linear part of $f$ along the fibers and the $\dim(J_n)=1$ hypothesis. In particular, the dynamic of $S$ is conjugated to the dynamic of $h$. By the previous arguments we prove that the lift of any invariant region intersects such hyper-surface and from the transitivity of $h$ we obtain a contradiction.

Let us observe that the hypothesis $\dim(J_n)=1$ is a necessary condition to imply the existence of the hypersurface and therefore essential to our proof, yet we do not know whether there exists a counter-example to Theorem \ref{SP2} if this hypothesis is removed.

In section 2 we proof Theorem \ref{SP1} and Theorem \ref{Examp}. In section 3 we develop the setting we will be working and prove the results from algebraic topology we will need. In section 4 we study some properties of invariant subspaces from integer matrices. In section 5 we prove Theorem \ref{SP2}. Observe that all the results stated in section 3 hold for toral endomorphisms, not just skew-products.

\newpage

\section{Theorem \ref{SP1} and Theorem \ref{Examp}}                                                        
 
Let us see the proof of Theorem \ref{SP1}

\begin{proof}

Let $\nu$ be the Haar measure on $\T^{n-1}$ and let $\lambda$ be the Haar measure on $\T$. 
We shall  first show that if $f$ preserves $\mu$, then $h$ preserves $\nu$.  Let  $r_1:\T^n\to \T^{n-1}$ be the projection $r_1(x_1,\dots, x_{n-1},x_n) = (x_1,\dots, x_{n-1})$ $\forall (x_1,\dots,x_n)\in \T^n$.  Given $B\subset \T^{n-1}$, we have that $r_1^{-1}(B)  = B\times \T^1$. Thus ${r_1}_* \mu = \nu$. By the skew-product structure $f^{-1}(B\times \T^1)= h^{-1}(B)\times \T^1$. Since $f$ is volume-preserving:
\[ \nu(h^{-1}(B))= \mu (h^{-1}(B)\times \T) = \mu(f^{-1}(B\times \T^1))=\mu(B\times \T^1) = \nu(B).\]

The  next step is to show that if $h$ is a homeomorphism, then $g_x$ preserves $\lambda$ for every $x \in \T^{n-1}$. It is instructive to consider the case in which $f$ is of class $C^1$. In this case, $f$, $h$, and each of the $g_x$ have well defined Jacobians. Let us denote these by $J(f, \cdot)$, $J(h, \cdot)$, and $J(g_x, \cdot)$ respectively. Since $f$ preserves $\mu$, we must have
\[1 = \sum_{(y,s)\in f^{-1}(x,t)} \frac{1}{J(f,(y,s))}= \sum_{y\in h^{-1}(x)} \frac{1}{J(h,y)}\sum_{s\in g_y^{-1}(t)} \frac{1}{J(g_y,s)}\ \forall x\in \T^{n-1},\ \forall t\in \T^1.\] 

Since $h$  is a homeomorphism, $\# h^{-1}(x) = 1$ and since it is volume-preserving, $J(h,y) = 1$. If we combine this with the previous equation, we obtain that:
\[\sum_{s\in g_y^{-1}(t)} \frac{1}{J(g_y,s)} = 1, \]
where $y = h^{-1}(x)$. Since $|\deg(g)|\geq 2$ and $J(g_y,s) > 0$, we can conclude that $J(g_y,s) > 1$ $\forall y\in \T^{n-1}$, $\forall s\in\T^1$. By continuity of $dg$ and compactness of $\T^n$, we have that $J(g_y,s) > 1+\epsilon$ for some $\epsilon > 0$. This implies that $g$  is expanding in the fibers.

In particular, given $I\subset \T^1$ there exists $k=k(I)>0$ such that for all $x\in \T^{n-1}$, $f^n(\{x\}\times I) = \{h^k(x)\}\times \T^1$. Let us take $U_1$ and $U_2$ open neighborhoods of $\T^{n-1}$, and $I_1$ and $I_2$ open neighborhoods of $\T^1$. We want to prove that $f^{k_1}(U_1\times I_1)\cap U_2\times I_2\neq \emptyset$ for some $k_1>0$. Taking $k$ associated to $I_1$ and using the transitivity of $h$, there exists $k_1 > k$ such that $h^{k_1}(U_1)\cap V_1 \neq \emptyset$. Then, $(h^{k_1}(U_1)\cap V_1)\times I_2 \subset f^{k_1}(U_1\times I_1)\cap U_2\times I_2$.

Now let us consider the more general case in which $f$ is only assumed to be a continuous surjective local homeomorphism. Our first assertion is that each $g_x$ preserves $\lambda$.

For the purpose of contradiction, suppose there is some $x$ such that $g_x$ does not preserve $\lambda$. That is equivalent to say that there is some continuous function $\phi: \T \to \R$ such that 
\begin{equation} \label{fiberinequality}
\int \phi \circ g_x \ d\lambda < \int \phi \ d\lambda.
\end{equation}
Since the map $\T^{n-1} \ni x \mapsto g_x \in C^0(\T, \T)$ is continuous, if (\ref{fiberinequality}) holds for some $x$, then it holds in an open set $U \in \T^{n-1}$. Let $\psi: \T^{n-1} \to \R$ be a non-negative continuous function, supported in $U$, such that $\int \psi \ d\nu>0$, and let $\varphi:\T^n \to \R$ be defined by $\varphi(x,t)=\phi(x)\psi(t)$. We claim that $\int \varphi \circ f \ d\mu < \int \varphi \ d\mu$, contradicting the $f$-invariance of $\mu$.

Indeed,
\begin{align}
\int \varphi \circ f \ d\mu  
& = \int \left( \int \phi(h(x)) \psi(g_x(t)) \ d\lambda(t) \right)  d\nu(x) \\
&= \int \phi(h(x)) \left(\int \psi \circ g_x \ d\lambda \right) d\nu(x) \\
&< \int \phi \circ h \ d\mu  \int \psi \ d\lambda \\
&= \int \phi \ d\nu \int \psi d\lambda = \int \varphi d\mu,
\end{align}
and we have arrived at the desired contradiction.

Now, since $g_x$ is not (necessarily) of class $C^1$, there may not exist $\epsilon>0$ such that $\lambda(g_x(I))>(1+\epsilon) \lambda(I)$ for every interval $I \subset \T$ such that $g_x:I \to g_x(I)$ is a homeomorphism. 
However, $\lambda(g_x(I))$ is always larger than $\lambda(I)$ so, by compactness, given any $K>0$ there is a $\delta>0$ such that if $\lambda(I)\geq K$ and $g_x$ is a homeomorphism from $I$ onto its image, then $\lambda(g_x(I))\geq \lambda(I)+\delta$. 
Writing $I_n = g_{h^{n-1}}(x) \circ \ldots g_{h(x)}\circ g_x(I)$, $n\geq 0$, then we see by induction that $\lambda(I_k)\geq \lambda(I)+ k \delta$ as long as $g_{h^k(x)}:I_k \to I_{k+1}$ is a homeomorphism. 
But $\lambda(I)+ k \delta$ is larger than $1$ for $k$ sufficiently large, so there must be some $k$ such that $I_k = \T$. Now we may apply the same argument as in the $C^1$ case to conclude that $f$ is transitive.
\end{proof}

Observe that when $|\deg(h)|\geq 2$,  we no longer have the condition $J(h,y)=1$ in the $C^1$ case. Instead, it is replaced by $\sum_{y\in h^{-1}(x)} \frac{1}{J(h,y)} = 1$. Since the sum has more than one term, this will imply  that $J(h,x) > 1$ for every $ x\in \T^{n-1}$, this is that $h$ expands volume on sufficiently small sets. But $J(h,\cdot)$ does not have to be constant, since a lesser volume expansion on some point $x_1$ can be compensated by a greater volume expansion on a point $x_2$, where $x_1$ and $x_2$ have the same image under $h$. This flexibility makes it possible to have a volume-preserving skew product which is contracting on some of its fibers. This is the content of Theorem \ref{Examp}. In particular, proving Theorem \ref{SP2} will require an entirely different approach than that in Theorem \ref{SP1}.

\begin{proof}[Proof of Theorem \ref{Examp}]
The example we are going to build will be piecewise linear and therefore $C^1$ in an open and dense set with full measure. The volume-preserving property will then be guaranteed by making sure that the equation
\begin{equation}\label{EqJac}
 \sum_{(x,t)\in f^{-1}(y,s)} \frac{1}{J(f,(x,t))} = 1
\end{equation}
hold for almost every point $(x,t)$ in $\T^{n-1}\times \T$. 

 Let $m=n-1$ denote the dimension of the base. For the base map $h: \T^m \to \T^m$ we take the linear endomorphism induced by the matrix $A = 2 \cdot Id$, where $Id$ is the identity matrix of size $m \times m$. Note that   $\deg(h) = |\det(A)| = 2^m$.  By standard arguments, $h$ is transitive. 
 
The action in the fibers will have two degrees of freedom in its construction. The first one is going to be the degree, denoted by $k = \deg(g)\geq 2$. The second one is going to be the rate of contraction $\lambda \in (0,1)$. In our  construction we are going to need $\lambda\in (1/2,1)$. We define the map $\phi:\T^1\to \T^1$ by
 \[\phi(t) =  \left\{
	\begin{array}{lll}
			 \lambda t & \mbox{if } t\in[0,1/(2\lambda)] \\
			\frac{(2k-1)\lambda}{2\lambda - 1}(t-1/(2\lambda)) + 1/2\ \mod\ 1& \mbox{if } t\in [1/(2\lambda),1]
		\end{array}
	\right.
\]

\pagebreak

Let us observe the following:
\begin{itemize}
\item $\lambda > 1/2$, so $2\lambda-1 > 1$. 
\item $\phi$ is clearly continuous at $1/(2\lambda)$ and, to check the continuity at $0$,  observe that  $\frac{(2k-1)\lambda}{2\lambda - 1}(1 - 1/(2\lambda)) + 1/2= k\in \N$.
\item $\phi$ contracts by the rate $\lambda$ the interval $[0,1/(2\lambda)]$ and expands by the rate $\eta = \frac{(2k-1)\lambda}{2\lambda - 1}$ the interval $[1/(2\lambda),1].$
\end{itemize}

We define  $g:\T^m \times \T^1\to \T^1$ by $g(x_1\dots,x_m,t) = x_1 +  \dots + x_m + \phi(t+ 1/(4\lambda)) - 1/4$ and finally $f:\T^{n}\to\T^{n}$ as the skew-product of the form $(h,g)$. 

In the definition of $g$, the addition and the subtraction of the constants $ 1/(4\lambda)$ and $1/4$  are to obtain $f(0) = 0$ and $\frac{\partial}{\partial t} g (x,t) = \lambda $ in a neighborhood  
$B\times I\subset \T^{n}$ of $0$. From this we can conclude all the desired properties in the statement of Theorem \ref{Examp}, except that $f$ is conservative. 

Given $a\in\T^1$, denote by $\psi_a:\T^1\to \T^1$ the map $\psi_a(t) = a + \phi(t+ 1/(4\lambda)) - 1/4$.

in order to prove that $f$ is conservative we need to understand the distribution of the preimages of a point. Given  $(y,s)\in\T^{n}$, we have that
\[f^{-1}(y,s) = \bigcup_{x\in h^{-1}(y)} \{(x,t)\in\T^{n}:g(x,t) = s\}.\]

As we said before $h^{-1}(y)$ has $2^m$ points. Fix $y_0\in p^{-1}(h^{-1}(y))$ and let $X_0 =\{y_0 + \frac{a_1}{2}e_1 + \dots + \frac{a_n}{2}e_n\in \R^n:a_i\in\{0,1\} \}$ where $e_1,\dots,e_n$ is the canonical basis of $\R^n$. Then, the natural projection $p:\R^n \to \T^n$ restricted to $X_0$ is a bijection onto $h^{-1}(y)$. Given $x\in h^{-1}(y)$, take $a_1,\dots,a_n \in\{0,1\}$ such that $x = p(y_0 + \frac{a_1}{2}e_1 + \dots \frac{a_n}{2}e_n)$. If $y_0 = (y^0_1,\dots,y^0_n)$ and $x = (x_1,\dots,x_n)$, then 
\[x_1+\dots + x_n= y^0_1+\dots+y^0_n +  \frac{a_1}{2} + \dots +\frac{a_n}{2} \mod 1.\] 
Define $a =  y^0_1+\dots+y^0_n$ and observe that 
\[\frac{a_1}{2} + \dots +\frac{a_n}{2} \mod 1 = \left\{
	\begin{array}{ll}
			 1/2 & \mbox{if }\#\{a_i:a_i = 1\}\mbox{ is odd} \\
			0 &  \mbox{if }\#\{a_i:a_i = 1\}\mbox{ is even}.
		\end{array}
	\right.\]

From this we conclude that the map
\[
h^{-1}(y) \ni (x_1,\dots,x_n) \mapsto  x_1 +  \dots + x_n \mod 
1,\]
 has $2$ possible values: $a$ and $a + 1/2$. In particular, each one is achieved by $2^m/2$ points of $h^{-1}(y)$. In order to understand the distribution of the preimages along the fiber we only need to study two maps, $\psi_a$ and $\psi_{a+1/2}$. 

Let us call $I\subset \T^1$ the interval where $\frac{\partial}{\partial t} \psi_a = \lambda$. By construction $|\psi_a(I)| = 1/2$ and therefore $\psi_a(I)\cap\psi_{a +1/2}(I)= \emptyset$. This means that, given $s\in \T^1$, unless $s$ lies on the boundary of $\psi_a(I)$, then either $s\in \psi_a(I)$ or $s\in \psi_{a+1/2}(I)$. If $s\in \psi_{a}(I)$, then there exists $t_0\in \psi_{a}^{-1}(s)$ such that $\frac{\partial}{\partial t}\psi_{a}(t_0) = \lambda$ and for the remaining $k-1$ points in $t\in\psi^{-1}_a(s)$ we have $\frac{\partial}{\partial t}\psi_{a}(t)=\eta$. On the other hand, since $s\notin \psi_{a+1/2}(I)$, we have $\frac{\partial}{\partial t}\psi_{a+1/2}(t) = \eta$ for all $t\in \psi_{a+1/2}^{-1}(s)$.

Note that, since $f$ is a skew-product, we have $J(f,(x,t)) = J(h,x)J(g_x,t)$. Consequently, on the full volume set where $J(f,(x,t))$ is well defined, it can attain one out of two possible values, $2^m\lambda$ or $2^m\eta$. We now put everything together. By Equation \ref{EqJac},  to prove that $f$ is conservative is equivalent to check that
\[\frac{2^n}{2} \left(\frac{1}{2^n \lambda} + (k-1)\frac{1}{2^n\eta}\right) + \frac{2^n}{2}k \frac{1}{2^n\eta}= 1.\] 
This is can be simplified to 
\[\frac{1}{\lambda} + \frac{2k-1}{\eta} = 2,\]
and replacing $\eta$ by its value $ \frac{(2k-1)\lambda}{2\lambda - 1}$ we verify the previous equation and therefore the map $f$ is conservative.
\end{proof}

\newpage

\section{Fundamental Group of Invariant Regions}                                                        

Through out this section $f:\T^n\to \T^n$ will be a volume-preserving endomorphism. 

\begin{defn} We say that an open subset $U\subset\T^n$ is an invariant region for $f:\T^n\to \T^n$  if $f^{-1}(U) = U$. 
\end{defn}

The motivation for this definition is the observation that if $U$ is an invariant region for $f$, then $U$ together with the restriction of $f$ to $U$ is itself a covering space.

\begin{prop}\label{NotTran}
If $f:\T^n\to\T^n$ is a conservative endomorphism, then the following are equivalent:
\begin{itemize}
\item $f$ is not transitive,
\item there exist $U,V\subset \T^n$ invariant regions for $\T^n$, such that $U$ is equal to the interior of $\T^n -V$.
\end{itemize}  
\end{prop}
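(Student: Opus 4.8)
The plan is to prove both implications, doing the easy direction first and reserving the real work for the construction of the two regions. Throughout I read the second item as asserting the existence of \emph{nonempty} (equivalently, proper) invariant regions $U,V$, since otherwise the degenerate pair $U=\emptyset$, $V=\T^n$ would make the statement vacuous and break the equivalence.

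For the implication ``regions $\Rightarrow$ not transitive'', suppose $U,V$ are nonempty invariant regions with $U=\text{int}(\T^n-V)$. Since $f^{-1}(U)=U$ and $f^{-1}(V)=V$, each of $U,V$ is forward invariant, and the closed set $\T^n\setminus(U\cup V)$ is fully invariant; a short computation identifies it with the topological boundary $\partial V$, which is nowhere dense. Every forward orbit is therefore trapped in exactly one of the three forward-invariant pieces $U$, $V$, $\partial V$. An orbit in $U$ misses the nonempty open set $V$, an orbit in $V$ misses the nonempty open set $U$, and an orbit in $\partial V$ stays inside a nowhere dense set; in each case the orbit closure is a proper subset of $\T^n$. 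Hence no orbit is dense and $f$ is not transitive.

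For the converse I would start from a Baire-category version of Birkhoff's transitivity theorem. Writing $\mathcal B$ for a countable base and $D_O=\bigcup_{n\ge 0}f^{-n}(O)$, the set of points with dense orbit equals $\bigcap_{O\in\mathcal B}D_O$. If every $D_O$ were dense, this intersection would be a dense $G_\delta$ and $f$ would be transitive; since it is not, some $D_B$ fails to be dense. Then $R:=\text{int}(\T^n-D_B)=\T^n\setminus\overline{D_B}$ is nonempty and, because $\T^n\setminus D_B$ is forward invariant and $f$ is an open map, $R$ is a forward-invariant open set. Its saturation $V:=\bigcup_{n\ge 0}f^{-n}(R)$ is then a genuine invariant region: the sets $f^{-n}(R)$ increase, so $f^{-1}(V)=V$, and $V\supseteq R$ is nonempty.

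The hard part is to show that $V$ is \emph{not} dense, for only then does $U:=\text{int}(\T^n-V)$ come out nonempty; this is the single place where I expect the full strength of the hypotheses to enter, and the key device is that a nonempty open set has positive Haar measure, so an \emph{open} set of measure zero must be empty. I would prove $V\cap D_B=\emptyset$: this set is open, and every point of $V$ eventually lands in $R=\T^n\setminus\overline{D_B}$, a forward-invariant set disjoint from $B$, so its orbit visits $B$ only finitely often; on the other hand Poincaré recurrence for the $f$-invariant Haar measure forces almost every point of $D_B$ to visit $B$ infinitely often. Hence $V\cap D_B$ has measure zero and, being open, is empty, so $V$ misses the nonempty open set $D_B$ and is non-dense. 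Finally I would check that $U=\text{int}(\T^n-V)$ is itself an invariant region: $f^{-1}(U)\subseteq U$ because $f^{-1}(U)$ is open and contained in the fully invariant set $\T^n\setminus V$, while $U\subseteq f^{-1}(U)$ because $f(U)$ is open (again using that $f$ is an open map) and contained in $\T^n\setminus V$. Since $V$ is non-dense we get $U\neq\emptyset$, and $U=\text{int}(\T^n-V)$ holds by construction, which completes the pair. The main obstacle is exactly the passage from the measure-theoretic smallness supplied by recurrence to the topological statement of non-density, which the ``open null set is empty'' observation is what makes possible.
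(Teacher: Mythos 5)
Your proof is correct and complete; note that the paper does not actually prove this proposition but defers to Proposition 3.2 of \cite{An15}, and your argument (Birkhoff's transitivity criterion to produce a non-dense $D_B$, saturation of $R=\T^n\setminus\overline{D_B}$ to obtain the invariant region $V$, then Poincar\'e recurrence combined with the fact that a nonempty open set has positive Haar measure to show $V\cap D_B=\emptyset$) is essentially the standard one used there. Your preliminary observation that $U$ and $V$ must be read as nonempty is also the intended reading of the statement.
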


For a proof of this proposition check further Proposition $3.2$ in \cite{An15}.  Our objective now is to have a more comfortable framework. This means to suppose that $U$ is connected.

\begin{lema} \label{periodiccomp} Let $f:\T^n\to\T^n$ be a conservative non-invertible endomorphism and suppose that $U$ is an invariant region. If $U_0$ is a connected component of $U$, then there exists $m\geq 1$ such that $U_0$ is an invariant region for $f^m$. 
\end{lema}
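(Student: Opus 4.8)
The plan is to turn the connected components of $U$ into a dynamical system under the map induced by $f$, locate $U_0$ as a periodic orbit by a recurrence argument, and then upgrade forward periodicity to genuine invariance using volume preservation. The non-invertibility hypothesis, incidentally, will not be needed: the argument works verbatim for any volume-preserving endomorphism.

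First I would record the elementary consequences of $f^{-1}(U)=U$. Since $f^{-1}(U)=\{x:f(x)\in U\}=U$, a point lies in $U$ if and only if its image does; in particular $f(U)\subset U$, and hence $f^n(U)\subset U$ for every $n\geq 1$. Consequently $f^n(U_0)$ is connected and contained in $U$, so it sits inside a single connected component $C_n$ of $U$, with $C_0=U_0$. This produces a well-defined forward orbit of components $U_0=C_0,C_1,C_2,\dots$. Next I would invoke volume preservation through Poincaré recurrence: the condition $\mu(f^{-1}B)=\mu(B)$ says precisely that $\mu$ is an $f$-invariant probability measure, and $U_0$, being a nonempty open set, has $\mu(U_0)>0$. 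Poincaré recurrence then yields a point $x\in U_0$ and an integer $m\geq 1$ with $f^m(x)\in U_0$. Since also $f^m(x)\in C_m$, the components $C_m$ and $U_0$ meet, forcing $C_m=U_0$ and therefore $f^m(U_0)\subset U_0$.

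The step I expect to carry the real weight is promoting the forward invariance $f^m(U_0)\subset U_0$ to the equality $(f^m)^{-1}(U_0)=U_0$. Writing $g=f^m$, I would first check that $g^{-1}(U_0)$ is saturated by components of $U$: any component $W$ of $U$ meeting $g^{-1}(U_0)$ has $g(W)$ connected, contained in $U$, and meeting $U_0$, hence $g(W)\subset U_0$, so $W\subset g^{-1}(U_0)$. One of these components is $U_0$ itself, because $g(U_0)\subset U_0$ gives $U_0\subset g^{-1}(U_0)$. Therefore $g^{-1}(U_0)\setminus U_0$ is a union of components and in particular is open. On the other hand $g$ is again volume-preserving, so $\mu(g^{-1}(U_0))=\mu(U_0)$, which forces $g^{-1}(U_0)\setminus U_0$ to have measure zero.

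Finally, a nonempty open subset of $\T^n$ has strictly positive Haar measure, so the measure-zero open set $g^{-1}(U_0)\setminus U_0$ must be empty. Hence $g^{-1}(U_0)=U_0$, that is, $(f^m)^{-1}(U_0)=U_0$, and $U_0$ is an invariant region for $f^m$. The only subtlety to watch is the bookkeeping between ``component'' and ``open set''—specifically that $g^{-1}(U_0)$ is genuinely a union of whole components so that subtracting $U_0$ leaves an open set—which is exactly the saturation claim verified above.
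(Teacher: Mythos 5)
Your proof is correct. The paper gives no argument of its own for this lemma, deferring instead to Lemma 3.9 of \cite{An15}, so there is no in-text proof to compare against line by line; but your route --- Poincar\'e recurrence applied to the positive-measure open set $U_0$ to make the component orbit $C_0,C_1,\dots$ return to $U_0$, followed by the observation that $(f^m)^{-1}(U_0)$ is a union of whole components of $U$, contains $U_0$, and has the same measure as $U_0$, hence equals $U_0$ --- is the standard one, and every step checks out (local connectedness of $\T^n$ makes the components open, which is what the final ``open set of measure zero is empty'' step needs). You correctly identify the saturation claim as the load-bearing step, and your remark that non-invertibility is never used is also accurate.
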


See Lemma $3.9$ in \cite{An15} for a proof. 

 The proof of Theorem \ref{SP2} will be by contradiction. Suppose that $f$ is not transitive. Then, by Proposition \ref{NotTran}, there are disjoint invariant regions $U$ and $V$ for $f$. By Lemma \ref{periodiccomp} each connected component of $U$ and $V$ is periodic. This means that there exist $m_1,m_2 \geq 1$, and connected components $U_0$ and $V_0$ of $U$ and $V$ respectively, such that $U_0$ is an invariant region for $f^{m_1}$ and $V_0$ is an invariant region for $f^{m_2}$. In particular, taking $m=m_1 m_2$, we have that both $U_0$ and $V_0$ are invariant regions for $f^m$. Since we are assuming that $f$ is not transitive, neither is $f^m$. Now, cleary the hypotheses in Theorem \ref{SP2} also hold for $f^m$. Therefore it suffices to consider the case in which both $U$ and $V$ are connected.

\begin{lema}\label{ipnt}
Let $f:\T^n\to\T^n$ be a conservative non-invertible endomorphism and suppose that $U$ is an invariant region. If $i:U\to \T^n$ is the inclusion and  $i_\htag :\pi_1(U)\to \pi_1(\T^n)$  is the group morphism induced by $i$, then $i_\htag$ is not trivial.   
\end{lema}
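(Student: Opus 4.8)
```latex
\textbf{Proof proposal.}
The plan is to argue by contradiction: suppose that $i_\htag$ is trivial,
so that every loop in $U$ is contractible in $\T^n$. The key idea is that
the invariant region $U$, restricted to which $f$ is still a covering map,
must then lift in a controlled way to the universal cover $\R^n$, and the
non-invertibility of $f$ will force a contradiction with the
volume-preserving hypothesis.

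First I would pass to the universal cover $p:\R^n\to\T^n$ and choose a lift
$F:\R^n\to\R^n$ of $f$. Under the assumption that $i_\htag(\pi_1(U))$ is
trivial, the inclusion $i:U\to\T^n$ lifts to an embedding of $U$ (or rather
of a connected component of $U$, which by Lemma \ref{periodiccomp} we may
take invariant for some iterate, reducing to the connected case) into a
single fundamental domain; equivalently, $p^{-1}(U)$ is a disjoint union of
copies of $U$, each mapped homeomorphically onto $U$ by $p$, and these
copies are permuted by the deck transformation group $\Z^n$. Let $\hu$ be
one such connected component of $p^{-1}(U)$ in $\R^n$. Because $U$ is
$f$-invariant, $\hu$ should be invariant (up to a deck transformation) under
$F$, and the restriction of $F$ to the family of lifted components of $U$
encodes the covering map $f|_U$.

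The next step is to exploit non-invertibility. Since $\deg(f)\ge 2$, the
region $U$ has at least two preimages distributed among the components of
$f^{-1}(U)=U$; lifting this to $\R^n$, the component $\hu$ must receive at
least two distinct components of $p^{-1}(U)$ under $F$, which under the
triviality assumption are related by nontrivial deck transformations, i.e.
by distinct vectors in $\Z^n$. I would then compare volumes: $p|_{\hu}$ is
injective onto $U$, so $\hu$ has finite volume equal to $\mu(U)$, while the
preimage of $\hu$ under $F$ is a disjoint union of $\deg(f)$ translated
copies, forcing (via $F$-invariance and conservativity, which lifts to
Lebesgue-measure preservation of $F$) a volume identity that cannot hold
unless the copies coincide. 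The triviality of $i_\htag$ is precisely what
makes the copies of $U$ in $\R^n$ genuinely disjoint translates rather than
a single connected set, and that is where the contradiction crystallizes.

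The main obstacle I anticipate is making rigorous the claim that triviality
of $i_\htag$ forces $p^{-1}(U)$ to split into disjoint homeomorphic copies
indexed cleanly by $\Z^n$, together with the bookkeeping of how $F$ permutes
these copies while respecting $f^{-1}(U)=U$ and $\deg(f)\ge 2$. One must be
careful that $U$ need not be simply connected even when its image in
$\pi_1(\T^n)$ is trivial, so the lift of $U$ may itself be a nontrivial
cover of $U$; the correct statement is that the components of $p^{-1}(U)$ are
the covers corresponding to $\Ker(i_\htag)=\pi_1(U)$, and I would need the
conservativity of $f$ (through Equation \ref{EqJac} in the $C^1$ picture, or
the measure-theoretic argument of Theorem \ref{SP1} in general) to pin down
the multiplicity with which $F$ covers each component. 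Closing this volume
count is the crux of the argument.
```
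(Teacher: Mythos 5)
Your setup is the right one, and it matches the route of the cited proof (the paper itself only refers to Lemma 3.6 of \cite{An15}): argue by contradiction, reduce to $U$ connected via Lemma \ref{periodiccomp}, and observe that if $i_\htag$ is trivial then $\Ker(i_\htag)=\pi_1(U)$, so by exactness each connected component $\hu$ of $p^{-1}(U)$ is carried \emph{homeomorphically} onto $U$ by $p$. The problem is in how you try to close the argument: both facts you lean on are false. First, $F^{-1}(\hu)$ is a \emph{single} component of $p^{-1}(U)$, not a disjoint union of $\deg(f)$ translated copies, because $F$ is a homeomorphism of $\R^n$; the $\deg(f)$ preimages of a point of $U$ do all lie in $U$, but their lifts inside the one component $F^{-1}(\hu)$ are what matters, not $\deg(f)$ distinct components. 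Second, the lift $F$ of a conservative degree-$d$ endomorphism does \emph{not} preserve Lebesgue measure (take $f(x)=2x \bmod 1$, $F(x)=2x$); it expands it. With the correct versions of these two facts your volume count produces no contradiction at all: $F$ carries one component of volume $\mu(U)$ homeomorphically onto another component of the same volume, which is perfectly consistent.

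The contradiction is a counting one and needs no volume at the final step. Since $p$ restricted to each component is a bijection onto $U$, the component $\hu''=F^{-1}(\hu)$ contains exactly one lift $\hat y_1,\hat y_2$ of two distinct preimages $y_1\neq y_2\in f^{-1}(x)\subset U$ of a point $x\in U$; then $F(\hat y_1)$ and $F(\hat y_2)$ both lie in $\hu$ and both project to $x$, yet $\hu$ contains only one lift of $x$, contradicting injectivity of $F$. Equivalently, in the language the paper later develops: triviality of $i_\htag$ forces $p_\htag$ to be an isomorphism (Lemma \ref{LemDia}), hence $(f_{|U})_\htag=p_\htag\circ \hf_\htag\circ p_\htag^{-1}$ is an isomorphism and Theorem \ref{degc} gives $\deg(f_{|U})=1$, against $\deg(f_{|U})=\deg(f)\geq 2$. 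Conservativity is used only where you already placed it, namely to make the components of $U$ periodic so that one may assume $U$ connected; it plays no role in the final volume identity you were hoping for.
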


See Lemma $3.6$ in \cite{An15} for a proof. From now on, we will assume that if $U$ is an invariant region, it is also connected.

Let us set the following notation. Given $f:\T^n\to\T^n$ and $U$ an invariant region, take $i:U\to \T^n$ to be the inclusion. Let $p:\R^n\to \T^n$ be the natural projection. A lift of $f$ is a homeomorphism $\hf:\R^n\to \R^n$ such that $f\circ p = p \circ \hf$. We write $\hu= p^{-1}(U)$. The composition of a lift of $f$ with a translation by a vector in $\Z^2$ is again a lift of $f$. Consequently, we can (and do) choose a lift $\hf$ of $f$ such that $\hu$ is invariant for $\hf$.

\pagebreak

\begin{lema} \label{LemDia}
Let $f:\T^n\to\T^n$ be an endomorphism and $U$ an invariant region. Take $i:U\to \T^n$, $p:\R^n\to \T^n$, $\hu\subset \R^n$ and $\hf:\R^n\to\R^n$ as before. Then
\begin{enumerate}
\item the diagram
\[\begin{CD}
0 @>>> \pi_1(\hu) @>p_\htag >> \pi_1(U) @>i_\htag >> \pi_1(\T^n)\\
@. @VV\hf_\htag V @VV(f_{|U})_\htag V  @VVf_\htag V \\
0 @>>> \pi_1(\hu) @>p_\htag >> \pi_1(U) @>i_\htag >> \pi_1(\T^n)
\end{CD}\]
is commutative, and
\item the sequence $0\rightarrow \pi_1(\hu) \stackrel{p_\htag }{\rightarrow} \pi_1(U) \stackrel{i_\htag }{\rightarrow} \pi_1(\T^n)$ is exact.
\end{enumerate}
\end{lema}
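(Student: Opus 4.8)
The plan is to handle the two parts by different means: part (1) is a formal consequence of the functoriality of $\pi_1$ applied to two squares of spaces that commute on the nose, while part (2) is covering-space theory that exploits the simple connectivity of $\R^n$, so that $p_\htag(\pi_1(\hu))$ is forced to coincide with $\Ker(i_\htag)$.

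For part (1), I would first fix a basepoint $\tilde x_0\in\hu$, put $x_0=p(\tilde x_0)\in U$, and take $x_0$ also as the basepoint of $\T^n$ (legitimate since $i$ is the inclusion). The point is that the two squares of continuous maps with horizontal rows $\hu\xrightarrow{\hf}\hu$, $U\xrightarrow{f_{|U}}U$, $\T^n\xrightarrow{f}\T^n$ and vertical maps $p$ and $i$ commute at the level of spaces: the upper square commutes because $p\circ\hf=f\circ p$ is exactly the defining relation of a lift, restricted to $\hu$ (which is legitimate since $\hf(\hu)=\hu$ and $f(U)=U$), and the lower square commutes because $i$ is an inclusion, so both $i\circ f_{|U}$ and $f\circ i$ are literally the restriction of $f$ to $U$. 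Applying the fundamental-group functor, carrying $\tilde x_0$ through the maps and using the change-of-basepoint identifications induced by the images of a single auxiliary path, yields the two commuting squares of the lemma. This is the routine part.

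For part (2), exactness at $\pi_1(\hu)$ together with the leading $0$ amounts to injectivity of $p_\htag$. Since $U$ is open and $p:\R^n\to\T^n$ is a covering map, its restriction $p:\hu=p^{-1}(U)\to U$ is again a covering map, and covering maps induce injective homomorphisms on $\pi_1$ via unique homotopy lifting; hence $p_\htag$ is injective. For exactness at $\pi_1(U)$ I would prove both inclusions. The inclusion $\Ima(p_\htag)\subseteq\Ker(i_\htag)$ is immediate, because a loop in $\hu\subset\R^n$ is null-homotopic in $\R^n$, and projecting such a null-homotopy by $p$ (using $i\circ p|_{\hu}=p|_{\hu}$ as maps into $\T^n$ and $\pi_1(\R^n)=0$) shows $i_\htag\circ p_\htag=0$. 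For the reverse inclusion, take a loop $\alpha$ in $U$ based at $x_0$ with $i_\htag[\alpha]=0$, i.e. $\alpha$ null-homotopic in $\T^n$. Then $[\alpha]\in p_\htag\pi_1(\R^n)=\{0\}$, so by the lifting criterion $\alpha$ lifts through $p$ to a \emph{loop} $\tilde\alpha$ in $\R^n$ based at $\tilde x_0$. Since $p\circ\tilde\alpha=\alpha$ takes values in $U$, the lift satisfies $\tilde\alpha(s)\in p^{-1}(U)=\hu$ for all $s$, so $\tilde\alpha$ is in fact a loop in $\hu$ with $p_\htag[\tilde\alpha]=[\alpha]$, giving $[\alpha]\in\Ima(p_\htag)$.

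The main obstacle is bookkeeping rather than substance. In part (1) one must remember that the vertical arrows are really homomorphisms between fundamental groups based at a point and at its image, so that the stated commutativity is an equality of homomorphisms only after the natural change-of-basepoint identifications, and these must be chosen compatibly across the whole diagram. In part (2) the delicate step is the last one: one has to verify that the lift of the null-homotopic loop $\alpha$ is genuinely closed (this is precisely the content of $[\alpha]\in p_\htag\pi_1(\R^n)$) and that it never leaves $\hu$, for which both the simple connectivity of $\R^n$ and the identity $\hu=p^{-1}(U)$ are essential; note that no connectivity of $\hu$ is needed, since the whole argument takes place within the component of $\tilde x_0$.
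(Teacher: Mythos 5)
Your proposal is correct and follows essentially the same route as the paper: commutativity of the squares from $p\circ\hf=f\circ p$ and from $i$ being an inclusion, injectivity of $p_\htag$ from the covering-map property, and exactness at $\pi_1(U)$ by lifting a loop $\gamma$ in $U$ to $\R^n$ and observing that $[\gamma]\in\Ker(i_\htag)$ precisely when the lift closes up, with the lift automatically contained in $\hu=p^{-1}(U)$. Your write-up merely makes explicit the basepoint bookkeeping and the two separate inclusions that the paper compresses into a single ``if and only if.''
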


\begin{proof}
The commutativity of the first square follows from the fact that $p\circ \hf = f\circ p$. The commutativity of the second square follows from the fact that $i:U\to \T^n$ is the inclusion.

 Let us prove the exactness. Observe that the injectivity of $p_\htag$ holds because $p$ is a covering map. In order to prove that $\Ker(i_\htag) = \Ima(p_\htag)$, fix a point $\hat{x}\in \hu$ and $x= p(\hat{x})$. Given $\gamma:[0,1]\to U$ a continuous curve such that $\gamma(0)=\gamma(1)= x$ observe that $[\gamma]\in \Ker(i_\htag )$ if $\gamma$ is homotopic to the constant curve $x$ in $\T^n$. This happens if and only if the lift $\hat{\gamma}$ of $\gamma$ on $\hat{x}$ verifies $\hat{\gamma}(0)=\hat{\gamma}(1)$. Therefore $ \hat{\gamma}$ is a closed curve in $\hu$ which represents an element of $\pi_1(\hu)$, and $p_\htag ([\hat{\gamma}]) = [\gamma]$.   
\end{proof}

\begin{rem}\label{RemSur}
In the previous situation,  $i_\htag :\pi_1(U) \to \pi_1(\T^n)$ might not be surjective. However, if we replace $\pi_1(\T^n)$ with $i_\htag (\pi_1(U))$, we obtain that the diagram
\[\begin{CD}
0 @>>> \pi_1(\hu) @>p_\htag >> \pi_1(U) @>i_\htag >> i_\htag (\pi_1(U)) @>>>0\\
@. @VV\hf_\htag V @VV(f_{|U})_\htag V  @VVf_\htag V \\
0 @>>> \pi_1(\hu) @>p_\htag >> \pi_1(U) @>i_\htag >> i_\htag (\pi_1(U)) @>>>0
\end{CD}\]
is commutative and the sequence $0\rightarrow \pi_1(\hu) \stackrel{p_\htag }{\rightarrow} \pi_1(U) \stackrel{i_\htag }{\rightarrow} i_\htag (\pi_1(U)) \rightarrow 0$ is exact.  Thereore, $i_\htag(\pi_1(U))$ is isomorphic to the quotient group $\pi_1(U) / \Ima(p_\htag)$.
\end{rem}

\begin{defn}
Given a group morphism $\phi:H\to G$ we define the degree of $\phi$ by $deg(\phi)= [G:\phi(H)]$, this is the number of elements in the quotient $G/\phi(H)$. 
\end{defn}

\begin{rem} \label{DegDet}
In the previous definition, if $H=G = \Z^n$, then $\phi$ can be represented by a matrix $A_\phi \in M_n(\Z)$. In such case, $deg(\phi)=|det(A_\phi)|$.
\end{rem}

We recall a classical result from the theory of covering spaces. 
\begin{teo}\label{degc}
Let $X$ and $Y$ be path connected topological spaces and $g:X\to Y$ a covering map.  Then, the number of sheets of $g$ is equal to $deg(g_\htag )$, where $g_\htag :\pi_1(X) \to\pi_1(Y)$ is the group morphism induced by $g$. 
\end{teo}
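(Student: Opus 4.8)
The plan is to identify the fiber of $g$ with a single orbit of the \emph{monodromy action} of $\pi_1(Y)$ and then invoke the orbit--stabilizer theorem. Fix a basepoint $y_0 \in Y$, set $F = g^{-1}(y_0)$, and fix $x_0 \in F$, so that $g_\htag$ denotes the morphism induced on fundamental groups based at $x_0$ and $y_0$. The number of sheets of $g$ is by definition the cardinality $|F|$, and this is independent of the chosen $y_0$ because $Y$ is path connected.

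First I would construct a right action of $\pi_1(Y,y_0)$ on $F$. Given a loop $\gamma$ based at $y_0$ and a point $\tilde{x} \in F$, the unique path-lifting property of the covering map $g$ produces a unique lift $\tilde{\gamma}$ of $\gamma$ with $\tilde{\gamma}(0) = \tilde{x}$, and I set $\tilde{x}\cdot[\gamma] = \tilde{\gamma}(1)$, which lies in $F$ since $\tilde{\gamma}(1)$ projects to $\gamma(1)=y_0$. The homotopy-lifting property guarantees that the endpoint $\tilde{\gamma}(1)$ depends only on the homotopy class $[\gamma]$ and not on its representative, so the operation is well defined; a routine verification shows it is a right action.

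Next I would establish the two facts that make orbit--stabilizer applicable. For transitivity: given any $\tilde{x}\in F$, path connectedness of $X$ provides a path $\tilde{\delta}$ from $x_0$ to $\tilde{x}$, and then $g\circ\tilde{\delta}$ is a loop at $y_0$ whose class carries $x_0$ to $\tilde{x}$; hence $F$ is a single orbit. For the stabilizer, I claim the isotropy subgroup of $x_0$ is exactly $g_\htag(\pi_1(X,x_0))$. Indeed, $[\gamma]$ fixes $x_0$ precisely when the lift of $\gamma$ starting at $x_0$ is itself a closed loop, in which case that lift represents a class in $\pi_1(X,x_0)$ mapping to $[\gamma]$ under $g_\htag$; conversely every element of $g_\htag(\pi_1(X,x_0))$ is the projection of such a loop and therefore fixes $x_0$.

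Finally, the orbit--stabilizer theorem yields a bijection between $F$ and the coset space $g_\htag(\pi_1(X,x_0)) \backslash \pi_1(Y,y_0)$, so that $|F| = [\pi_1(Y,y_0) : g_\htag(\pi_1(X,x_0))] = \deg(g_\htag)$, which is the claim. The main technical point, and the step where care is needed, is the well-definedness of the monodromy action together with the precise identification of the stabilizer with $g_\htag(\pi_1(X,x_0))$; both rest entirely on the unique path-lifting and homotopy-lifting properties of covering maps, and once these are in hand the counting argument is immediate.
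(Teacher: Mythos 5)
Your argument is correct: the monodromy action on the fiber, its transitivity via path connectedness of $X$, the identification of the stabilizer of $x_0$ with $g_\htag(\pi_1(X,x_0))$, and the orbit--stabilizer count are all sound, and together they give $|F|=[\pi_1(Y,y_0):g_\htag(\pi_1(X,x_0))]=\deg(g_\htag)$ exactly as the statement requires. The paper itself gives no proof and simply refers to Hatcher, and your proof is precisely the standard lifting-correspondence argument found there, so there is nothing to compare beyond noting the agreement.
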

A proof of this result can be found in \cite{Ha02}.

The following lemma, in combination with Theorem \ref{degc} and Lemma \ref{LemDia}, will be the main ingredient in the proof of Theorem \ref{SP2}. It is a purely algebraic result:

\pagebreak

\begin{lema}\label{LemAlg}
Let $H, G$ and $K$ be groups, and let $\alpha: H\to G$, $\beta:G\to K$, $\phi:H\to H$, $\psi:G\to G$ and $\nu:K\to K$ be group morphisms such that:
\begin{itemize}
\item $\phi$ is an isomorphism.
\item the sequence $H\stackrel{\alpha}{\rightarrow} G \stackrel{\beta}{\rightarrow} K \to 0$ is exact.
\item the diagram
\[
\begin{CD}
H @>\alpha >> G @>\beta>> K @>>> 0 \\
@VV\phi V @VV\psi V  @VV\nu V \\
H @>\alpha >> G @>\beta>> K @>>> 0 \\
\end{CD}
\]
is commutative.
\end{itemize}
Then, $\deg(\psi)=\deg(\nu)$. 
\end{lema}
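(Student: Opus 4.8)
The plan is to compute both degrees as indices and exploit the fact that $\phi$ is an isomorphism together with the exactness of the rows. Recall $\deg(\psi) = [G : \psi(G)]$ and $\deg(\nu) = [K : \nu(K)]$. The core idea is that $\beta$ induces an isomorphism $G/\alpha(H) \to K$ (by exactness, $\beta$ is surjective with kernel exactly $\alpha(H)$), and the commutativity of the diagram makes this isomorphism compatible with the maps $\psi$ and $\nu$. So I would first pass to the quotient $\bar G = G/\alpha(H)$, show $\psi$ descends to a map $\bar\psi$ on $\bar G$, and then identify $\deg(\bar\psi)$ with $\deg(\nu)$ via the isomorphism $\bar G \cong K$. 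The remaining work is to relate $\deg(\psi)$ on $G$ to $\deg(\bar\psi)$ on $\bar G$, and this is precisely where $\phi$ being an isomorphism enters.

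Here are the steps in order. First I would verify that $\psi$ descends to a well-defined morphism $\bar\psi : \bar G \to \bar G$: this requires $\psi(\alpha(H)) \subseteq \alpha(H)$, which follows from the commutativity of the left square, $\psi \circ \alpha = \alpha \circ \phi$, giving $\psi(\alpha(H)) = \alpha(\phi(H)) = \alpha(H)$ (using surjectivity of $\phi$). Second, the commutativity of the right square, $\nu \circ \beta = \beta \circ \psi$, together with the induced isomorphism $\bar\beta : \bar G \xrightarrow{\sim} K$, shows that $\bar\beta$ conjugates $\bar\psi$ to $\nu$; hence $\deg(\bar\psi) = \deg(\nu)$ since an isomorphism carries the image $\bar\psi(\bar G)$ onto $\nu(K)$ and so preserves the index. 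Third, and this is the crux, I would establish $\deg(\psi) = \deg(\bar\psi)$. This is the claim that quotienting by $\alpha(H)$ does not change the index of the image, and it relies on $\phi$ being an isomorphism: because $\psi$ maps $\alpha(H)$ \emph{onto} $\alpha(H)$ (not merely into it), the situation is that of a map of a short exact sequence $0 \to \alpha(H) \to G \to \bar G \to 0$ whose left-hand arrow is surjective with equal image.

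I expect the third step to be the main obstacle, since the naive multiplicativity of indices across a short exact sequence, $[G : \psi(G)] = [\alpha(H) : \psi(\alpha(H))] \cdot [\bar G : \bar\psi(\bar G)]$, requires care about whether $\psi(G) \cap \alpha(H) = \psi(\alpha(H))$ and about finiteness of the indices. The clean way around this is precisely the hypothesis that $\psi|_{\alpha(H)}$ has image all of $\alpha(H)$: the first factor $[\alpha(H) : \psi(\alpha(H))]$ is then $1$, so that once the index-multiplicativity (or an elementary counting of cosets) is justified, one gets $\deg(\psi) = \deg(\bar\psi)$ directly. In the intended application all groups are copies of $\Z^n$ and degrees are $|\det|$ by Remark \ref{DegDet}, so if one preferred a less abstract route, one could instead choose compatible bases and read off the determinant of $\psi$ as the product of $\det\phi = \pm 1$ and the determinant of $\nu$ in block-triangular form; I would keep the group-theoretic argument as primary since it matches the stated generality, and mention the determinant computation only as a sanity check.
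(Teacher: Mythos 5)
Your proposal is correct and follows essentially the same route as the paper: show $\psi(\alpha(H))=\alpha(H)$ using that $\phi$ is an isomorphism, pass to the quotient $G/\alpha(H)\cong K$ to identify $\deg(\bar\psi)$ with $\deg(\nu)$, and then use $\alpha(H)\subseteq\psi(G)$ to get a coset bijection $G/\psi(G)\to (G/\alpha(H))/(\psi(G)/\alpha(H))$ showing $\deg(\psi)=\deg(\bar\psi)$. The multiplicativity-of-indices worry you raise is bypassed in the paper exactly as you suggest, by the direct coset correspondence.
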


\begin{proof}
Take $N = \Ima(\alpha)= \Ker(\beta)\triangleleft G$. Then, $\psi (N) = \psi(\alpha(H))= \alpha(\phi(H)) = \alpha(H) = N$ because $\phi$ is an isomorphism. This allow us to define the morphism $\tilde{\psi}:G/N\to G/N$ by  $\tilde{\psi}(gN)= \psi(g)N$. Take also $\tilde{\beta}:G/N\to K$ defined by $\tilde{\beta}(gN) = \beta(g)$. Since $\beta$ is surjective and $N = \Ker(\beta)$,  $\tilde{\beta}$ is an isomorphism. Since $\nu \circ \beta = \beta \circ \psi$,  we have $\nu \circ \tilde{\beta} = \tilde{\beta} \circ \tilde{\psi}$. This means that the following diagram is commutative: 
\[
\begin{CD}
G/N @>\tilde{\beta}>> K \\
@VV\tilde{\psi}V @VV \nu V\\
G/N @>\tilde{\beta}>> K \\
\end{CD}
\]
Since $\tilde{\beta}$ is an isomorphism, we have $\deg(\nu) = \deg(\tilde{\psi})$. It remains then to prove that $ \deg(\psi)=\deg(\tilde{\psi})$. This is, we need to see that $[G:\psi(G)] = [G/N:\tilde{\psi}(G/N)]$. Since $N = \psi (N)$, we have that $N \triangleleft \psi(G)< G$. Now, $\tilde{\psi}(G/N) =\{\psi(g)N: g\in G\}= \psi(G)/N$, where the first equality comes from the definition of $\tilde{\psi}$ and the second holds because $N<\psi (G)$. So our problem is reduced to prove that $[G:\psi(G)] = [G/N:\psi(G)/N)]$. In order to do this, we define the map $\eta: G/\psi(G) \to (G/N)/(\psi(G)/N)$ by $\eta(g\psi(G))= (gN)(\psi(G)/N)$. It is well defined and bijective. Hence $\deg(\tilde{\psi}) = \deg(\psi)$. 
\end{proof}

Let us fix now a convenient notation. Given a subset $B\subset \R^n$ we define $<B>\subset \R^n$ as the subspace induced by $B$.

The following is the main lemma of this paper:

\begin{lema}\label{LemPrin}
Let $f:\T^n\to\T^n$ be a volume-preserving endomorphism and $U$ an invariant region. If $S= <i_\htag(\pi_1(U))>$, then $|\det (A_{f|S})|=|\det(A_f)|$.
\end{lema}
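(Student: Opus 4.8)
The plan is to deduce the identity from the purely algebraic Lemma \ref{LemAlg} combined with the covering-space count of Theorem \ref{degc}. First I would record the two quantities to be matched. The commutativity $f_\htag\circ i_\htag = i_\htag\circ(f_{|U})_\htag$ supplied by Lemma \ref{LemDia} (and Remark \ref{RemSur}) shows that $f_\htag = A_f$ maps $i_\htag(\pi_1(U))$ into itself; hence $A_f$ preserves the real span $S=\langle i_\htag(\pi_1(U))\rangle$ and the restriction $A_{f|S}$ is genuinely defined. On the global side, $\deg(f_\htag)=|\det(A_f)|$ by Remark \ref{DegDet}, and this also equals the number of sheets $\deg(f)$ of the covering $f$.

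Next I would apply Lemma \ref{LemAlg} to the exact sequence of Remark \ref{RemSur}, \[0\to\pi_1(\hu)\stackrel{p_\htag}{\rightarrow}\pi_1(U)\stackrel{i_\htag}{\rightarrow} i_\htag(\pi_1(U))\to 0,\] taking as vertical maps $\phi=\hf_\htag$, $\psi=(f_{|U})_\htag$ and $\nu=f_\htag|_{i_\htag(\pi_1(U))}$. All hypotheses are furnished by Remark \ref{RemSur} except that $\phi=\hf_\htag$ be an isomorphism; this holds because the lift $\hf\colon\R^n\to\R^n$ is a homeomorphism leaving $\hu$ invariant, so $\hf|_{\hu}$ is a self-homeomorphism of $\hu$ and induces an isomorphism on $\pi_1(\hu)$. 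Lemma \ref{LemAlg} then gives $\deg(\psi)=\deg(\nu)$, i.e. $\deg((f_{|U})_\htag)=\deg\big(f_\htag|_{i_\htag(\pi_1(U))}\big)$.

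It remains to evaluate both degrees geometrically. For the left-hand side I would apply Theorem \ref{degc} to the covering $f|_U\colon U\to U$ (note $U$ is connected, hence path-connected, being open in $\T^n$): its degree is the number of sheets, namely the cardinality of $(f|_U)^{-1}(y)$ for $y\in U$. Since $f^{-1}(U)=U$, every $f$-preimage of a point of $U$ lies in $U$, so this fibre is the full $f$-fibre and has $\deg(f)=|\det(A_f)|$ elements. For the right-hand side, writing $L=i_\htag(\pi_1(U))$, we have by definition $\deg(\nu)=[L:A_f(L)]$. As a subgroup of $\Z^n$, $L$ is free abelian and is a full-rank lattice in its span $S$; choosing a $\Z$-basis of $L$ (which is simultaneously an $\R$-basis of $S$) represents $A_{f|S}$ by an integer matrix, and the standard lattice-index formula yields $[L:A_f(L)]=|\det(A_{f|S})|$. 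Chaining the equalities, \[|\det(A_{f|S})|=[L:A_f(L)]=\deg(\nu)=\deg(\psi)=\deg((f_{|U})_\htag)=\deg(f)=|\det(A_f)|.\]

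The delicate step I expect to guard most carefully is the last identification $[L:A_f(L)]=|\det(A_{f|S})|$: one must check that $L$ has full rank in $S$, so that a $\Z$-basis of $L$ is an $\R$-basis of $S$ and the determinant of the restriction is well defined and basis-independent, and that $A_{f|S}$ is nonsingular, so that the index is finite and equals $|\det(A_{f|S})|$. Both are immediate here, since $L\subseteq\Z^n$ is discrete of rank $\dim S$ and $\det(A_f)\neq 0$ (because $\deg(f)\geq 2$); the verifications that $\hf_\htag$ is an isomorphism and that Theorem \ref{degc} is applicable are then routine.
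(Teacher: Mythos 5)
Your proposal is correct and follows essentially the same route as the paper: the exact sequence of Remark \ref{RemSur}, Lemma \ref{LemAlg} applied with $\phi=\hf_\htag$ (an isomorphism since the lift is a homeomorphism preserving $\hu$), Theorem \ref{degc} for the covering $f|_U\colon U\to U$ with $\deg(f)$ sheets, and the identification $\deg\bigl(f_\htag|_{i_\htag(\pi_1(U))}\bigr)=|\det(A_{f|S})|$, which the paper disposes of via Remark \ref{DegDet} and you justify slightly more carefully through the lattice-index formula for the full-rank lattice $L=i_\htag(\pi_1(U))\subset S$. The only point the paper makes explicit that you leave implicit is the appeal to Lemma \ref{ipnt} to guarantee $i_\htag(\pi_1(U))$ is nontrivial, so that $S\neq\{0\}$ and the determinant of the restriction is meaningful.
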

\begin{proof}
We observed in Remark \ref{RemSur} that the diagram 
\[\begin{CD}
\pi_1(\hu) @>p_\htag >> \pi_1(U) @>i_\htag >>  i_\htag (\pi_1(U)) @>>> 0  \\
@VV\hf_\htag V @VV(f_{|U})_\htag V  @VV(f_\htag )_{|i_\htag (\pi_1(U))}V \\
\pi_1(\hu) @>p_\htag >> \pi_1(U) @>i_\htag >> i_\htag (\pi_1(U)) @>>> 0
\end{CD}\]
is commutative and the sequence $\pi_1(\hu) \stackrel{p_\htag }{\rightarrow} \pi_1(U) \stackrel{i_\htag }{\rightarrow} i_\htag (\pi_1(U)) \to 0$ is exact.  Since $\hf$ is a homeomorphism, $\hf_\htag $ is an isomorphism and we can apply Lemma \ref{LemAlg}. We have the following equation:

\begin{equation}\resizebox{.8\hsize}{!}{$|\det(A_{f|S})| \stackrel{(1)}{=} \deg((f_\htag )_{|i_\htag (\pi_1(U))}) \stackrel{(2)}{=} \deg((f_{|U})_\htag ) \stackrel{(3)}{=} \deg((f_{|U}) )  \stackrel{(4)}{=}\deg(f) \stackrel{(5)}{=} \deg(f_\htag) \stackrel{(6)}{=}  |\det(A_f)|,$}
\end{equation}
 
where $(1)$ and $(6)$ holds by Remark \ref{DegDet}, (2) holds by Lemma \ref{LemAlg} and because by Lemma \ref{ipnt} $i_\htag(\pi_1(U))$ is not trivial, $(3)$ and $(5)$ by Theorem \ref{degc} and $(4)$ because  $\deg((f_{|U}) )$ is the number of preimages of any point for the map $f_{|U}$, since $U$ is an invariant region this number coincides with the number of preimages of $f$ which is $\deg(f)$.
\end{proof}

\begin{rem}
If we remove the volume-preserving hypothesis from Lemma \ref{LemPrin}, we obtain that $i_\htag (\pi_1(U)) = \{0\}$ or $ |\det (A_{f|S})|=|\det(A_f)|$.
\end{rem}

\newpage

\section{Invariant subspaces of an Integer Matrix}

The objective of this section is to prove the following proposition:

\begin{prop}\label{PropDet}
Given $A\in M_n(\Z)$ and $\{0\}\subsetneq S\subsetneq  \R^n$ an invariant subspace by $A$. If $\det(A_{|S})\in \Z-\{0\}$, then $\det(A_{|S})$ divides $\det(A)$. In particular, $|\det(A_{|S})|\leq |\det(A)|$. 
\end{prop}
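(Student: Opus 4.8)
The plan is to factor $\det(A)$ through the invariant subspace $S$ and to reduce the statement to the assertion that the determinant of the map induced by $A$ on the quotient $\R^n/S$ is an integer; I will obtain this last fact from the theory of algebraic integers, so that no rationality of $S$ is required. Since $S$ is $A$-invariant, $A$ descends to a well-defined linear map $\bar A:\R^n/S\to\R^n/S$. Choosing a basis $v_1,\dots,v_k$ of $S$ and completing it to a basis $v_1,\dots,v_n$ of $\R^n$, the matrix of $A$ in this basis is block upper triangular,
\[
\begin{pmatrix} B & C\\ 0 & D\end{pmatrix},
\]
where $B$ represents $A_{|S}$ and $D$ represents $\bar A$. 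Hence $\det(A)=\det(A_{|S})\cdot\det(\bar A)$. As $\det(A)\in\Z$ and $\det(A_{|S})\in\Z\setminus\{0\}$ by hypothesis, the number $\det(\bar A)=\det(A)/\det(A_{|S})$ is a well-defined rational number, and the whole Proposition reduces to showing that $\det(\bar A)\in\Z$.

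The key step is to recognise $\det(\bar A)$ as an algebraic integer. From the same block triangular form the characteristic polynomials factor as $\chi_A=\chi_{A_{|S}}\cdot\chi_{\bar A}$, so every complex root of $\chi_{\bar A}$ is a root of $\chi_A$. Because $A\in M_n(\Z)$, the polynomial $\chi_A$ is monic with integer coefficients, and therefore each of its roots is an algebraic integer. Since $\det(\bar A)$ equals the product of the roots of the monic polynomial $\chi_{\bar A}$, it is a finite product of algebraic integers and hence itself an algebraic integer.

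Finally I would combine the two observations: $\det(\bar A)$ is at once a rational number (from the factorisation of $\det(A)$) and an algebraic integer (from the root argument). Since $\Z$ is integrally closed in $\Q$, a rational algebraic integer is an ordinary integer, so $\det(\bar A)\in\Z$. Then $\det(A)=\det(A_{|S})\cdot\det(\bar A)$ exhibits $\det(A_{|S})$ as a divisor of $\det(A)$, giving in particular $|\det(A_{|S})|\leq|\det(A)|$. I do not expect a genuine technical obstacle here; the one idea that is not mere bookkeeping is the observation that $\det(\bar A)$, although \emph{a priori} only rational, is forced to be an algebraic integer and hence an integer. It is worth noting that this argument never uses that $S$ is spanned by integer vectors; in the application to Lemma \ref{LemPrin} the relevant subspace is of that form, which would also allow a more concrete proof via the saturated sublattice $S\cap\Z^n$ and a stacked-basis (Smith normal form) argument, but the algebraic-integer route is shorter and more general.
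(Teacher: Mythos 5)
Your proof is correct, and it takes a genuinely different route from the paper's. Where you pass to the quotient $\R^n/S$ and factor $\det(A)=\det(A_{|S})\cdot\det(\bar A)$ through the induced map $\bar A$, the paper instead constructs a complementary $A$-invariant subspace $W\subset\R^n$ with $\det(A)=\det(A_{|S})\det(A_{|W})$; that costs it an entire lemma carried out via the real Jordan form, with separate bookkeeping for nontrivial Jordan blocks and for complex-conjugate eigenvalue pairs. Your quotient sidesteps that construction entirely, since $\bar A$ exists for free once $S$ is invariant, and the block-triangular factorization of $\det(A)$ is immediate. The integrality step is packaged differently in the two arguments but rests on the same fact: the paper (Lemma \ref{LemDet}) realizes the determinant of the restriction to an invariant subspace as an eigenvalue of the integer matrix induced by $A$ on an exterior power and then applies the rational root theorem (Lemma \ref{LemEig}), whereas you observe that $\det(\bar A)$ is a product of roots of the monic integer polynomial $\chi_A$, hence an algebraic integer, and that a rational algebraic integer lies in $\Z$. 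Both are instances of $\Z$ being integrally closed in $\Q$; your phrasing requires quoting that algebraic integers form a ring, while the paper's exterior-power device keeps everything at the level of a single monic integer polynomial. Neither argument uses any rationality of $S$, as you correctly note. Your version is shorter and arguably cleaner; the paper's is more self-contained for a reader who prefers not to invoke the ring of algebraic integers.
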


With this in mind, we start by showing that if $\lambda$ is a rational eigenvalue of an integer matrix, then $\lambda$ is an integer. Indeed this is a direct consequence of the well known Rational Root Theorem in elementary algebra. We include it for completeness and because it serves as a warm-up for the proof of Proposition \ref{PropDet}.

\begin{lema}\label{LemEig}
Given $A\in M_n(\Z)$ if $\lambda\in \Q$ is an eigenvalue of $A$, then $\lambda \in \Z$.
\end{lema}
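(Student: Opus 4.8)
The plan is to reduce the statement to the Rational Root Theorem by passing to the characteristic polynomial of $A$. First I would recall that, since $A\in M_n(\Z)$, its characteristic polynomial $p(x)=\det(xI-A)$ is a \emph{monic} polynomial of degree $n$ with integer coefficients, say $p(x)=x^n+c_{n-1}x^{n-1}+\dots+c_1x+c_0$ with each $c_i\in\Z$. This is immediate from cofactor expansion of the determinant: every entry of $xI-A$ is either an integer or a degree-one monic polynomial in $x$ with integer coefficients, the unique term producing $x^n$ is the product of the diagonal entries, and all coefficients are integer polynomial combinations of the entries of $A$.

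Next, since $\lambda\in\Q$ is an eigenvalue, it is a root of $p$. I would write $\lambda=a/b$ in lowest terms with $b\geq 1$ and $\gcd(a,b)=1$, substitute into $p(\lambda)=0$, and clear denominators by multiplying through by $b^n$, obtaining $a^n+c_{n-1}a^{n-1}b+\dots+c_1ab^{n-1}+c_0b^n=0$. Every term after the first is divisible by $b$, so $b\mid a^n$. Because $\gcd(a,b)=1$ implies $\gcd(a^n,b)=1$, this forces $b=1$, and hence $\lambda=a\in\Z$. This is exactly the Rational Root Theorem in the special case of a monic polynomial, where the constraint $q\mid a_n$ on the denominator degenerates to $q\mid 1$ because the leading coefficient is $1$.

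There is no substantive obstacle here; the only point requiring any care is the elementary coprimality argument showing that $b\mid a^n$ together with $\gcd(a,b)=1$ yields $b=1$. The real value of isolating this lemma is as a warm-up: the same ``clear denominators and track integer divisibility'' strategy reappears in the proof of Proposition \ref{PropDet}, where the rational quantity $\det(A_{|S})$ takes the role played here by the single root $\lambda$, and one must control divisibility of integer combinations coming from an invariant subspace rather than from a single polynomial equation.
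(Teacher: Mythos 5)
Your proof is correct and follows essentially the same route as the paper: both apply the Rational Root Theorem to the monic integer characteristic polynomial, with the coprimality of numerator and denominator forcing the denominator to be $\pm 1$. One small aside: the paper does not re-run this ``clear denominators'' strategy in Proposition \ref{PropDet}; instead it realizes $\det(A_{|S})$ as an eigenvalue of an integer matrix acting on an exterior power and then invokes this lemma directly, so the lemma is reused as a black box rather than as a template.
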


\begin{proof}
If $\chi_A$ is the characteristic polynomial of $A$, then all the coefficients of $\chi_A$ belong to $\Z$ and moreover $\chi_A$ is monic. Suppose that $\chi_A(t)= (-1)^n t^n + \sum_{i=0}^{n-1} a_i t^i$ and take $p,q\in \Z$ coprimes, with $q\neq 0$ such that $\chi_A(\frac{p}{q}) = 0$, then $0 = \frac{p^n}{q^n} + \frac{r}{q^{n-1}}$ for some $r\in \Z$. If $r=0$, then $p=0$ and we are done. If $r\neq 0$, then $-q r= p^n$. Since we took $q$ and $p$ coprime, the later equation implies that $q = \pm 1$ and we conclude. 
\end{proof}

The following lemma extends the previous lemma to invariant subspaces.
 
\begin{lema} \label{LemDet}
Given $A\in M_n(\Z)$ and $\{0\}\subsetneq S\subsetneq  \R^n$ an invariant subspace by $A$. If $\det(A|S)\in \Q$, then $\det(A|S)\in \Z$. 
\end{lema}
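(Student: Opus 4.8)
The plan is to reduce the statement to Lemma \ref{LemEig} by exhibiting $\det(A|S)$ as a rational eigenvalue of a suitable integer matrix manufactured from $A$. Let $k=\dim S$, so $1\leq k \leq n-1$. First I would pass to the $k$-th exterior power of the ambient space: consider the linear map $\Lambda^k A$ induced by $A$ on $\Lambda^k \R^n$, the space of alternating $k$-vectors. Relative to the standard basis $\{e_{i_1}\wedge\cdots\wedge e_{i_k} : i_1<\cdots<i_k\}$, the matrix of $\Lambda^k A$ has as its entries exactly the $k\times k$ minors of $A$ (this is the classical compound matrix). Since $A\in M_n(\Z)$, every such minor is an integer, and therefore $\Lambda^k A$ is represented by a matrix in $M_{\binom{n}{k}}(\Z)$.

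The crux of the argument is to recognise $\det(A|S)$ as an eigenvalue of $\Lambda^k A$. I would pick any basis $v_1,\dots,v_k$ of $S$ and form the nonzero decomposable $k$-vector $\omega = v_1\wedge\cdots\wedge v_k$. Because $S$ is $A$-invariant, each $Av_j$ again lies in $S$, and the standard multilinear identity for the action of a linear map on the top exterior power of an invariant subspace gives
\[
\Lambda^k A(\omega) = (Av_1)\wedge\cdots\wedge(Av_k) = \det(A|S)\cdot \omega .
\]
Hence $\omega$ is an eigenvector of $\Lambda^k A$ associated to the eigenvalue $\det(A|S)$.

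Putting the two observations together finishes the proof: $\det(A|S)$ is an eigenvalue of the integer matrix $\Lambda^k A$, and by hypothesis it is rational. Applying Lemma \ref{LemEig} to $\Lambda^k A$ then forces $\det(A|S)\in\Z$, which is exactly what we want. The point that requires the most care is the verification that the entries of the compound matrix $\Lambda^k A$ are the $k\times k$ minors of $A$ (hence integers) together with the wedge identity $\Lambda^k A(\omega)=\det(A|S)\,\omega$; both are routine multilinear algebra but should be stated cleanly so that the reduction to Lemma \ref{LemEig} is transparent.

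As an alternative route, one could instead extend a basis of $S$ to a basis of $\R^n$, observe that $A$ becomes block upper-triangular so that the characteristic polynomial of $A|S$ divides $\chi_A$, and conclude that every eigenvalue of $A|S$ is a root of the monic integer polynomial $\chi_A$, hence an algebraic integer; then $\det(A|S)$, being a product of algebraic integers, is an algebraic integer, and a rational algebraic integer is an integer. I prefer the exterior-power argument above, since it invokes Lemma \ref{LemEig} directly and avoids appealing to the fact that algebraic integers form a ring.
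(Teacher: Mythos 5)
Your proposal is correct and follows essentially the same route as the paper: the paper also passes to the $m$-th exterior power (written there with $\otimes$ but indexed by increasing tuples, i.e.\ the compound matrix), observes its integrality, identifies $\det(A|_S)$ as an eigenvalue, and concludes via Lemma \ref{LemEig}. Your version merely spells out the eigenvector $\omega=v_1\wedge\cdots\wedge v_k$ explicitly, which the paper leaves implicit.
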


\begin{proof}
Given $1\leq m \leq n$ we define the $m$ exterior power of $\R^n$ by $V_m =\overbrace{\R^n\otimes\dots\otimes \R^n}^{m\ times}$. In $V_m$ we define the linear map $A_m:V_m\to V_m$ by $A_m(v_1\otimes \dots\otimes v_m) = A(v_1)\otimes\dots \otimes A(v_m)$. If $\{e_1,\dots,e_n\}$ is the canonical basis in $\R^n$, then $\{e_{i_1}\otimes \dots \otimes e_{i_m}:1\leq i_1<\dots<i_m\leq n \}$ is a basis for $V_m$. Each $A_m$ can be represented by a matrix with respect to this basis. Since $A\in M_n(\Z)$, these matrices have integer coefficients.   Notice that if $S \subset \R^n$ is a subspace invariant under $A$, then $\det(A_{|S})$ is an eigenvalue of $A_m$ where $m= \dim(S)$. Hence, applying Lemma \ref{LemEig} to the (integer) matrix of $A_m$, we conclude that $\det(A_{| S})$ is an integer.
\end{proof}

The next lemma will be the final piece to prove Proposition \ref{PropDet}.

\begin{lema}
Given $A\in M_n(\Z)$ and $\{0\}\subsetneq S\subsetneq  \R^n$ an invariant subspace by $A$. Then, there exists $W$ an invariant subspace by $A$ such that $\det(A)=\det(A_{|S})\det(A_{|W})$.
\end{lema}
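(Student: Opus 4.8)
The plan is to reduce the statement to the case where the characteristic polynomial of $A$ is a power of a single irreducible real polynomial, and then exploit the existence of flags of invariant subspaces inside each such block. Note that, unlike the naive guess, $W$ cannot in general be taken to be an $A$-invariant complement of $S$ (a single Jordan block already has no invariant complement for its eigenline); the point of the statement is only to match determinants, and determinants turn out to be controlled by dimension alone inside a primary block.

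First I would invoke the primary (generalized eigenspace) decomposition of $A$ over $\R$. Writing the characteristic polynomial as a product $\prod_i p_i^{k_i}$ of powers of distinct monic irreducible polynomials $p_i\in\R[t]$, one has $\R^n=\bigoplus_i E_i$ with $E_i=\ker p_i(A)^{k_i}$, each invariant. The crucial point is that $S$ is compatible with this decomposition: by the Chinese Remainder Theorem applied to the coprime factors $p_i^{k_i}$, each projection $\pi_i:\R^n\to E_i$ is a polynomial in $A$, hence maps the invariant subspace $S$ into itself, so that $S=\bigoplus_i (S\cap E_i)$. Consequently $\det(A)=\prod_i\det(A_{|E_i})$ and $\det(A_{|S})=\prod_i\det(A_{|S\cap E_i})$, and it suffices to produce, inside each $E_i$, an invariant subspace $W_i$ with $\det(A_{|E_i})=\det(A_{|S\cap E_i})\det(A_{|W_i})$; then $W=\bigoplus_i W_i$ does the job.

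Within a single block $E_i$ the key observation is that the determinant of $A$ restricted to an invariant subspace depends only on its dimension. If $p_i(t)=t-\lambda$ then $A_{|E_i}=\lambda\,\mathrm{Id}+N$ with $N$ nilpotent, every invariant subspace of dimension $d$ has determinant $\lambda^{d}$, and invariant subspaces of every dimension $0,\dots,\dim E_i$ exist since a nilpotent operator admits a full flag (and the invariant subspaces of $A_{|E_i}$ coincide with those of $N$). If $p_i$ is an irreducible quadratic with complex roots $\mu,\bar\mu$, then $E_i$ has even dimension, every invariant subspace has even dimension and determinant $|\mu|^{2k}$ on a subspace of dimension $2k$, and invariant subspaces of every even dimension exist via the real Jordan form. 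In either case $\dim E_i-\dim(S\cap E_i)$ is an achievable dimension — it is automatically even in the quadratic case, because $S\cap E_i$ is then even-dimensional — so I choose $W_i\subset E_i$ invariant of exactly that dimension, and the determinants multiply correctly purely by the dimension count.

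The main obstacle is the verification that the primary decomposition is genuinely respected by $S$ together with the flag-existence inside each block; both are standard consequences of the structure theory of $\R[t]$-modules, but they are what make the argument go through. Everything else is bookkeeping: the dimension-matching in the previous paragraph sidesteps the absence of invariant complements, since $W$ need not be transverse to $S$, only to carry the complementary determinant. Assembling the $W_i$ gives an $A$-invariant $W$ with $\det(A)=\det(A_{|S})\det(A_{|W})$, as required, and combining this with Lemma \ref{LemDet} (which forces $\det(A_{|W})=\det(A)/\det(A_{|S})\in\Q$ to be an integer) will then yield Proposition \ref{PropDet}.
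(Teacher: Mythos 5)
Your argument is correct, and it reaches the conclusion by a genuinely different route than the paper. The paper also passes to the (real) Jordan form, but it then tries to describe $S$ explicitly: it asserts that $S$ splits as the direct sum of its intersections $S_l=S\cap V_l$ with the individual Jordan block subspaces, and that each $S_l$ is spanned by a tail $\{v^l_{m_l},\dots,v^l_{k_l}\}$ of the Jordan basis, from which an explicit $W_l$ of matching determinant is written down. That explicit description is delicate when several Jordan blocks share an eigenvalue (for $A=\mathrm{Id}$ on $\R^2$ and $S$ a diagonal line, $S\cap V_1=S\cap V_2=\{0\}$, so $S\neq\bigoplus_l S_l$). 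You sidestep this by decomposing only along the primary components $E_i=\ker p_i(A)^{k_i}$: there the projections are polynomials in $A$, so the splitting $S=\bigoplus_i(S\cap E_i)$ is automatic, and inside a primary component you never need to know what $S\cap E_i$ looks like --- only its dimension --- because the determinant of the restriction to any invariant subspace is $\lambda^{d}$ (resp. $|\mu|^{d}$) as a function of the dimension $d$ alone, and invariant subspaces of every admissible dimension exist by the flag of a nilpotent (resp. real Jordan) operator. In short, the paper buys an explicit $W$ at the cost of a fragile structural claim about $S$, while your version trades the explicit construction for a dimension count that is both shorter and more robust; the parity check in the quadratic case and the final assembly $W=\bigoplus_i W_i$ are exactly what is needed, so no gap remains.
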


\begin{proof}
Let us suppose that $A$ is diagonalizable. In that case there is a basis of $\R^n$ consisting of eigenvectors $\{v_1,\dots, v_n\}$ of $A$. Since $S$ is invariant under $A$, there exist $1\leq i_1<\dots<i_m\leq n$ such that $S = <\{v_{i_1},\dots,v_{i_m}\}>$, where $m=\dim(S)$. Therefore, if we take $W=<\{v_i:i \neq i_j\ \forall j = 1,\dots, m\}>$, then $W$ is invariant under $A$. Observe that $\det(A_{|S}) = \prod_{j=1}^m \lambda_{i_j}$ and $\det(A_{|W}) = \prod_{i\notin\{i_1,\dots,i_k\}} \lambda_i$, where $\lambda_i$ is the eigenvalue associated to $v_i$ for all $1\leq i \leq n$. Since $\det(A)= \prod_{i=1}^{n}\lambda_i$, we have  $\det(A)=\det(A_{|S})\det(A_{|W})$.

We are going to address the case when $A$ is not diagonalizable and it has no complex eigenvalues. The other case will be discussed later.  We will now take the real Jordan form associated to $A$. Let us briefly recall what this is. If $A$ is diagonalizable, it means that there exist a diagonal matrix $D$ associated to $A$ and a basis $\B$ (formed by eigenvectors) such that the linear map associated to $A$ is represented by $D$ in the basis $\B$. When $A$ is not diagonalizable, we have an almost diagonal matrix $J$ associated to $A$ and a basis $\B$ such that the linear map associated to $A$ is represented by $J$ in the basis $\B$.  

With  $J$ the real Jordan form of $A$ we are going to decompose our invariant subspace $S$ in small invariant subspaces $S_l$, where each one will be a subspace associated to a Jordan Block $J_l$. For each $S_l$ we are going to build an invariant subspace  $W_l$ and then $W=\oplus_l W_l$ will verify the desired equation.
 
Given a Jordan block $J_l$ we consider $\B_l$ the elements of the Jordan basis $\B$ associated to $J_l$. That is, $\B_l$ is the set  $\{v^l_1,\dots,v^l_{k_l}\}\subset \B$ such that $A(v^l_j) = \lambda_l v^l_j + v^l_{j+1}$ if $1 \leq j < k_l$ and   $A(v^l_{k_l}) = \lambda_l v^l_{k_l}$ where $\lambda_l$ is the eigenvalue associated to $J_l$. Consider  the subspace $V_l$ induced by $\B_l$. Note that $S_l = V_l\cap S$ is invariant under $A$ and therefore is going to be the subspace induced by $\{v^l_{m_l},\dots,v^l_{k_l}\}$ where $k_l - m_l = \dim (S_l)$. Observe that we cannot build $W$ as before because the induced space by $\{v^l_1,\dots, v^l_{m_l-1}\}$ is not invariant. However, if we define $W_l$ by $W_l=<\{v^l_{k_l - m_l},\dots, v^l_{k_l}\}>$, then it is invariant and the equation $\det(A_{|V_l})=\det(A_{|S_l})\det(A_{|W_l})$ holds. Now  if we define $W = \oplus_l W_l$, we have that $W$ is invariant by $A$ and the equation $\det(A)=\det(A_{|S})\det(A_{|W})$ also holds.

When there are complex eigenvalues, since the characteristic polynomial of $A$ has real coefficients, these necessarily come in pairs of complex conjugates.  For each of this pairs corresponds a two dimensional subspace on which $A$ acts as a composition of a rotation with a homothecy. Its Jordan block is of the form  $\left( \begin{smallmatrix}
a & b \\
-b & a
\end{smallmatrix}\right)$ in $\R^2$. From this, if we take $\B_l$ to be the elements of the Jordan basis $\B$ associated to the Jordan block $J_l$ where the eigenvalue $\lambda_l$ is complex, we have the following:
\[\B_l = \{v^{l,1}_1,v^{l,2}_1,\dots,v^{l,1}_{k_l},v^{l,2}_{k_l}\},\]
\[A(v^{l,1}_j)= a_l v^{l,1}_j - b_l v^{l,2}_j + v^{l,1}_{j+1}\ if\ 1\leq j< k_l, \]
\[A(v^{l,2}_j)= b_l v^{l,1}_j + a_l v^{l,2}_j + v^{l,2}_{j+1}\ if\ 1\leq j< k_l,\]
\[A(v^{l,1}_{k_l})= a_l v^{l,1}_{k_l} - b_l v^{l,2}_{k_l}, \]
and
\[A(v^{l,2}_{k_l})= b_l v^{l,1}_{k_l} + a_l v^{l,2}_{k_l}.\]
In this case, if $V_l$ is the subspace induced by $\B_l$ and $S_l= S\cap V_l$, we have that $S_l$ is the subspace induced by $\{v^{l,1}_{m_l},v^{l,2}_{m_l} \dots,v^{l,1}_{k_l},v^{l,2}_{k_l}\}$ where $2(k_l - m_l) = dim(S_l)$. We then build analogously $W_l$ and $W$.

\end{proof}

\begin{proof}[Proof of Proposition \ref{PropDet}]
Given $\{0\}\subsetneq S\subsetneq  \R^n$ such that $\det(A_{|S})\in \Z$, by the previous Lemma take $W$ invariant by $A$ which verifies $\det(A_{|S})\det(A_{|W})=\det(A)$. Since $\det(A_{|S}) \in \Z$ and $\det(A)\in \Z$, then $\det(A_{|W})\in \Q$. By Lemma \ref{LemDet}, $\det(A_{|W})\in \Z$.
\end{proof}

\newpage

\section{Skew-Products of codimension 1}

The main objective of this section is to prove Theorem \ref{SP2} and Corollaries \ref{CorDom1} and \ref{CorDom2}.

Let us define $\hat r_1:\R^n \to \R^{n-1}$ and $ r_1:\T^n\to \T^{n-1}$ by 
\[\hat r_1(x_1,\dots, x_n) = (x_1,\dots, x_{n-1})\ \forall (x_1,\dots,x_n)\in \R^n,\]
and 
\[ r_1(x_1,\dots, x_n) = (x_1,\dots, x_{n-1})\ \forall (x_1,\dots,x_n)\in \T^n.\]
 The skew-product structure of $f$ implies that $r_1 \circ f = h \circ r_1$. In particular, we can take lifts $\hf:\R^n\to \R^n$ and $\hat h:\R^{n-1}\to \R^{n-1}$  of $f$ and $h$ such that $\hat r_1 \circ  \hf = \hat h \circ \hat r_1$.

The following two properties verified by a lift of $f$ and the linear map of $f$ come from classical arguments (check further \cite{Wa70}):

\begin{prop} \label{LinProp}Let $f:\T^n\to \T^n$ be a continuous map, $\hat f:\R^n \to \R^n$ be a lift of $f$, $f_\htag :\Z^n \to \Z^n$ the induced action by $f$ in the fundamental group of $\T^n$ and $A_f\in M_n(\Z)$ the matrix that represents $f_\htag $. Then, we have the following equations:
\begin{equation}\label{lift1} \hat f (x+v) = \hat f (x) + f_\htag (v)\ \forall x\in \R^n,\ \forall v\in \Z^n,\end{equation}
\begin{equation}\label{lift2} \exists L_0 > 0\text{ such that }d(\hat f (x), A_f(x)) \leq L_0\ \forall x\in \R^n. \end{equation}
\end{prop}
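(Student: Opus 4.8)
The plan is to derive both equations from the single defining property $p\circ\hat f=f\circ p$ of a lift, where $p:\R^n\to\T^n$ is the projection, together with the fact that $\Z^n$ acts on $\R^n$ as the group of deck transformations of $p$, namely by integer translations. First I would prove (\ref{lift1}). Fix $v\in\Z^n$ and consider the map $x\mapsto \hat f(x+v)-\hat f(x)$. Since $p(x+v)=p(x)$, the lift property gives $p(\hat f(x+v))=f(p(x+v))=f(p(x))=p(\hat f(x))$, so $\hat f(x+v)-\hat f(x)\in\Z^n$ for every $x$. This map is continuous and takes values in the discrete set $\Z^n$, and $\R^n$ is connected, so it is constant, say equal to $w(v)\in\Z^n$. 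Thus $\hat f(x+v)=\hat f(x)+w(v)$ for all $x$, and a one-line computation iterating this identity shows that $v\mapsto w(v)$ is a group homomorphism of $\Z^n$.

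It then remains to identify $w(v)$ with $f_\htag(v)$, and this is the step I expect to require the most care, since it is where the topological definition of the induced morphism on $\pi_1$ actually enters. I would fix $\hat x\in\R^n$, set $x=p(\hat x)$, and take a loop $\gamma$ at $x$ representing $v$ under the identification $\pi_1(\T^n,x)\cong\Z^n$; concretely, the lift $\tilde\gamma$ of $\gamma$ starting at $\hat x$ ends at $\hat x+v$. Then $\hat f\circ\tilde\gamma$ is a path covering $f\circ\gamma$, because $p\circ\hat f\circ\tilde\gamma=f\circ p\circ\tilde\gamma=f\circ\gamma$, and it runs from $\hat f(\hat x)$ to $\hat f(\hat x+v)=\hat f(\hat x)+w(v)$. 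Hence the loop $f\circ\gamma$ lifts to a path whose endpoints differ by $w(v)$, so its class in $\pi_1$ is $w(v)$; by definition of the induced morphism this class is $f_\htag(v)$, giving $w(v)=f_\htag(v)$ and establishing (\ref{lift1}).

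For (\ref{lift2}) I would set $g(x)=\hat f(x)-A_f(x)$ and show it is $\Z^n$-periodic. Since $A_f$ is linear and represents $f_\htag$, we have $A_f(x+v)=A_f(x)+A_f(v)$ and $A_f(v)=f_\htag(v)$ for $v\in\Z^n$; combining this with (\ref{lift1}) yields $g(x+v)=\hat f(x)+f_\htag(v)-A_f(x)-f_\htag(v)=g(x)$. A continuous $\Z^n$-periodic function on $\R^n$ factors through the compact quotient $\T^n$, hence is bounded, and taking $L_0=\sup_{x\in\R^n}|g(x)|$ gives $d(\hat f(x),A_f(x))\le L_0$ for all $x$.

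The routine portions are the connectedness-plus-discreteness argument for (\ref{lift1}) and the periodicity computation for (\ref{lift2}); the genuinely delicate point is the identification $w(v)=f_\htag(v)$, which depends on fixing compatible conventions for the isomorphism $\pi_1(\T^n)\cong\Z^n$ and for the correspondence between homotopy classes of loops and the integer displacements of the endpoints of their lifts. Once that identification is pinned down, everything else follows mechanically.
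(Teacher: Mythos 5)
Your proposal is correct and follows essentially the same route as the paper: the identification of the displacement with $f_\htag(v)$ via lifting the loop $\gamma$ and observing that $\hat f\circ\tilde\gamma$ covers $f\circ\gamma$ is exactly the paper's argument, and the boundedness of $\hat f - A_f$ via $\Z^n$-periodicity and compactness of the fundamental domain is the paper's proof of the second equation. The only cosmetic difference is that you first prove the difference $\hat f(x+v)-\hat f(x)$ is a constant integer vector by a connectedness argument before identifying it, whereas the paper computes the endpoint of the lifted loop directly for each $x$.
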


For the sake of completeness we give a proof of these statements.

\begin{proof}
Let $p:\R^n\to \T^n$ be the natural projection and define $\hat \alpha:[0,1]\to \R^n$ and $\alpha:[0,1]\to \T^n$ by $\hat \alpha(t) = x+tv$ and $\alpha(t) = p(\alpha(t))$. In particular, $\hat f \circ \hat \alpha$ is curve which starts at $\hat f (x)$ and ends at $\hat f(x+v)$. On the other hand, by definition, $\alpha$ is a loop and $[\alpha] = v$. Now  $p\circ \hat f \circ \hat \alpha = f \circ p \circ \hat \alpha = f \circ \alpha$ and therefore $\hat f \circ \hat \alpha$ is a lift of the curve $f \circ \alpha$. By definition, $f_\htag (v) = f_\htag ([\alpha]) = [f\circ \alpha]$ and then $\hat f \circ \hat \alpha$ is a lift which begins at $\hat f(x)$ and ends at $\hat f(x) + f_\htag (v)$ obtaining that $\hat f(x+v) = \hat f(x) + f_\htag (v)$.

For the second equation define $\left\lfloor x \right\rfloor \in \Z^n $ such that $x - \left\lfloor x \right\rfloor \in[0,1)^n$. Using Equation \ref{lift1}, we have:
\[\hat f (x) = \hat f(x - \left\lfloor x \right\rfloor + \left\lfloor x \right\rfloor ) = \hat f(x - \left\lfloor x \right\rfloor) + f_\htag (\left\lfloor x \right\rfloor),\] 
then
\[\hat f(x) - A_f(x) = \hat f(x - \left\lfloor x \right\rfloor) + f_\htag (\left\lfloor x \right\rfloor) -A_f( x -\left\lfloor x \right\rfloor) - A_f(\left\lfloor x \right\rfloor).\]
Since $ f_\htag (\left\lfloor x \right\rfloor) = A_f(\left\lfloor x \right\rfloor)$, we obtain $\hat f(x) - A_f(x) = \hat f(x - \left\lfloor x \right\rfloor) -A_f( x -\left\lfloor x \right\rfloor)$. This implies that $\Ima(\hat f - A_f) = \Ima((\hat f  - A_f)_{|[0,1]^n})$. Using the compactness of $[0,1]^n$, we conclude the proposition.
\end{proof}

We return to the skew-product structure with the following construction: As we said before $\pm \deg(g)$ is an eigenvalue of $A_f$ and $e_n = (0,\dots,0,1)$ is the eigenvector associated to it (the eigenvalue is $\deg(g)$ if $f$ preserves the orientation on the fiber and $-\deg(g)$ if $f$ reverses it). Let  $\{v_1,\dots, v_{n-1},e_n\}$ be a Jordan basis for $A_f$.  If  $J_n$ is the Jordan block associated to $e_n$ and if $dim(J_n)=1$,  then  $P_0= <\{v_1,\dots,v_{n-1}\}>$  is a hyperplane invariant by  $A_f$,   transverse to $e_n$.  In particular, the transverse condition implies that $\hat r_{1|P_0}$ is a linear isomorphism. This is the only place where we use the hypothesis $\dim(J_n)=1$. It guarantees the existence of $P_0$.

\begin{rem}\label{RemOri}
Let $\lambda_n$ be the eigenvalue of $A_f$ associated to the eigenvector $e_n$. Observe that if $\lambda_n < 0$,  then $\lambda_n^2 >0$ is the eigenvalue associated to $e_n$ under the map $A_f^2 = A_{f^2}$. If $f$ is not transitive, neither is $f^2$. We may therefore assume in what follows that $f$ preserves the orientation in the fibers. 
\end{rem}

Given two parallel hyperplanes $P_1,P_2\subset \R^n$ we call $[P_1,P_2]\subset \R^n$ the connected set which has $P_1\cup P_2$ as its boundary and for which $\R^n\cap [P_1,P_2]^c$ has two connected components. 

\begin{lema}
There exists $k_1\leq k_2\in \Z$ such that 
\[\hf^{-1}([P_0+k_1 e_n,P_0 + k_2 e_n])\subset  [P_0+k_1 e_n,P_0 + k_2 e_n].\]
\end{lema}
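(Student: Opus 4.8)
The plan is to reduce the whole statement to a single real coordinate that measures displacement in the $e_n$ direction, on which the linear part $A_f$ acts by multiplication by $\deg(g)\ge 2$. First I would introduce the linear functional $\ell:\R^n\to\R$ determined by $\ker\ell=P_0$ and $\ell(e_n)=1$. Such an $\ell$ exists and is unique precisely because $P_0$ is a hyperplane transverse to $e_n$, so that $\R^n=P_0\oplus\langle e_n\rangle$. With this normalization each translate of $P_0$ becomes a level set, $P_0+ke_n=\{x:\ell(x)=k\}$, and therefore the region in the statement is the closed slab
\[[P_0+k_1e_n,\,P_0+k_2e_n]=\{x\in\R^n:\ k_1\le\ell(x)\le k_2\}.\]

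Next I would record how $A_f$ acts through $\ell$. Writing $x=w+se_n$ with $w\in P_0$, invariance of $P_0$ gives $A_fw\in P_0$, while $A_fe_n=\lambda_n e_n$ with $\lambda_n=\deg(g)$ (assumed positive and $\ge 2$ after Remark \ref{RemOri}). Hence $A_fx=A_fw+s\lambda_ne_n$ and $\ell(A_fx)=s\lambda_n=\lambda_n\ell(x)$, i.e. $\ell\circ A_f=\lambda_n\ell$: the $\ell$-coordinate is strictly expanded by the linear part. I would then transfer this to the lift using Proposition \ref{LinProp}: by \eqref{lift2} there is $L_0>0$ with $d(\hf(x),A_fx)\le L_0$ for all $x$, so setting $L=\|\ell\|L_0$ (with $\|\ell\|$ the operator norm of $\ell$) yields the key estimate $|\ell(\hf(x))-\lambda_n\ell(x)|\le L$ for every $x\in\R^n$.

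With this estimate in hand, the choice of constants is forced. I would take integers $k_2\ge L/(\lambda_n-1)$ and $k_1\le -L/(\lambda_n-1)$ (for instance $k_2=\lceil L/(\lambda_n-1)\rceil\ge 0$ and $k_1=-k_2$, so that $k_1\le k_2$), which gives $(\lambda_n-1)k_2\ge L$ and $(\lambda_n-1)k_1\le -L$. Writing $S$ for the slab, I would verify the desired inclusion $\hf^{-1}(S)\subset S$ in its contrapositive form, $x\notin S\Rightarrow\hf(x)\notin S$. If $\ell(x)>k_2$ then $\ell(\hf(x))\ge\lambda_n\ell(x)-L>\lambda_n k_2-L\ge k_2$; if $\ell(x)<k_1$ then $\ell(\hf(x))\le\lambda_n\ell(x)+L<\lambda_n k_1+L\le k_1$. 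In either case $\hf(x)\notin S$, which is exactly the statement $\hf^{-1}(S)\subset S$.

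The calculations involved are elementary, so I do not anticipate a serious obstacle; the only point demanding care is the logical direction. Because the statement concerns the preimage under the homeomorphism $\hf$, the natural quantity to control is the \emph{complement} of the slab, and one must check that it is forward-invariant rather than that the slab itself is. This is what makes the expansion work in our favour: the factor $\lambda_n=\deg(g)\ge 2$ strictly exceeds $1$ and so dominates the bounded additive error $L$ once the slab is taken wide enough, pushing any point outside the slab further away in the $\ell$-coordinate. The one place where hypotheses must be invoked explicitly is in passing from the exact relation $\ell\circ A_f=\lambda_n\ell$ to an estimate for $\hf$ with only a bounded error, which is supplied precisely by \eqref{lift2}.
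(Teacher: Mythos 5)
Your proof is correct and follows essentially the same route as the paper: both rest on the comparison $d(\hf(x),A_f(x))\le L_0$ from Proposition \ref{LinProp} together with the fact that $A_f$ scales the $e_n$-displacement from $P_0$ by $\deg(g)\ge 2$, and both choose $k_1,k_2$ so that the expansion beats the additive error $(\deg(g)-1)k_2\ge L$. The only cosmetic difference is at the end: the paper shows $\hf([P_0+k_1e_n,P_0+k_2e_n])\supset[P_0+k_1e_n,P_0+k_2e_n]$ and applies $\hf^{-1}$, whereas you show the complement of the slab is forward-invariant, which is the same estimate read in contrapositive form.
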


\begin{proof}
By Proposition \ref{LinProp}, take $L_0 > 0$ such that $d(\hf(x),A_f(x))\leq L_0 \ \forall x\in \R^n$. Since $P_0$ is invariant for $A_f$, we have that 
\[\hf (P_0) \subset [P_0-L_0 e_n, P_0+L_0 e_n].\]
Combining this and the Equation \ref{lift1} from Proposition \ref{LinProp}, we obtain 
\[\hf(P_0 + ke_n)\subset [P_0+ (\deg(g) k - L_0)e_n, P_0 + (\deg(g) k + L_0)e_n].\]
Take $k_1,k_2\in \Z$ such that $\deg(g) k_2 - L_0 \geq k_2$ and $\deg(g) k_1 + L_0 \leq k_1$. We can find such $k_1$ and $k_2$ because $\deg(g)\geq 2$. In particular, $k_1 \leq 0\leq k_2$.
For these $k_1$ and $k_2$ we have that
\[\hf([P_0 + k_1 e_n,P_0 + k_2 e_n])\supset [P_0 +k_1 e_n,P_0 + k_2 e_n].\]
Since $\hf$ is a homeomorphism, if we apply $\hf^{-1}$ to the previous equation, we conclude the lemma.
\end{proof}

We define the set $S_0$ by 
\[S_0= \bigcap_{k\in\N} \hf^{-k}([P_0+k_1 e_n,P_0 + k_2 e_n]).\]
Let us define $r_0:S_0\to \R^{n-1}$ by $r_0(s) = \hat r_1(s)\ \forall s\in S_0$.

Given a continuous curve $\alpha:[0,1]\to \R^n$ such that $\alpha(1) = \alpha(0) + v$ with $v\in\Z^n$ we define its periodic continuation as $\alpha_\infty:\R \to \R^n$ by $\alpha_\infty(t) = v \left\lfloor t\right\rfloor + \alpha (t - \left\lfloor t \right\rfloor)$, where $\left\lfloor t \right\rfloor$ is the integer part of $t$.

\begin{lema}\label{PropS0}
The set $S_0$ verifies the following:
\begin{enumerate} 
\item $\hf(S_0) = S_0$.
\item $r_0(S_0) = \R^{n-1}$.
\item $\R^n\cap S_0^c$ has two connected components.
\item Given $\alpha:[0,1]\to \R^n$ such that $\alpha(1) = \alpha(0) + v$ with $v$ transverse to $P_0$, then $S_0\cap \Ima(\alpha_\infty)\neq\emptyset$.
\item $r_0^{-1}(x)$ is a connected set for every $ x\in \R^{n-1}$ and is therefore either a point or an interval. 
\item $r_0$ is a semi-conjugacy between $\hf_{|S_0}$ and $\hat h$. This is $\hat h \circ r_0 = r_0 \circ \hf_{|S_0}$
\end{enumerate}
\end{lema}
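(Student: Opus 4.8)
The plan is to base everything on a fiberwise description of $S_0$. Let $R=[P_0+k_1e_n,P_0+k_2e_n]$ and let $\ell\colon\R^n\to\R$ be the linear functional with $\ker\ell=P_0$ and $\ell(e_n)=1$, so that $P_0+ce_n=\ell^{-1}(c)$ and $R=\ell^{-1}([k_1,k_2])$. Since $A_f$ preserves $P_0$ and, after Remark~\ref{RemOri}, satisfies $A_fe_n=\deg(g)e_n$, a direct computation gives $\ell\circ A_f=\deg(g)\,\ell$. The identity $\hat r_1\circ\hf=\hat h\circ\hat r_1$, together with the fact that the lift $\hat h$ is a homeomorphism of $\R^{n-1}$, shows that $\hf$ maps each fiber $\{x\}\times\R$ homeomorphically onto $\{\hat h(x)\}\times\R$, the restriction being an increasing bijection of lines (orientation on fibers is preserved by Remark~\ref{RemOri}). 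Hence $\Phi_k:=\ell\circ\hf^k$ restricts to a strictly increasing homeomorphism of each fiber onto $\R$, and for every $x$ there are unique $a_k(x)<b_k(x)$ with $\Phi_k(x,a_k(x))=k_1$ and $\Phi_k(x,b_k(x))=k_2$; thus $\hf^{-k}(R)\cap(\{x\}\times\R)=\{x\}\times[a_k(x),b_k(x)]$. A standard monotonicity argument shows that $a_k,b_k\colon\R^{n-1}\to\R$ are continuous.

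Items (1) and (6) are then immediate. As $\hf$ is bijective and $\hf(R)\supset R\supset S_0$ by the previous lemma, $\hf(S_0)=\hf(R)\cap S_0=S_0$, giving (1); and for $s\in S_0$ one has $r_0(\hf(s))=\hat r_1(\hf(s))=\hat h(\hat r_1(s))=\hat h(r_0(s))$, giving (6). For (2) and (5), the intervals $[a_k(x),b_k(x)]$ are nested, nonempty and compact, so their intersection $r_0^{-1}(x)=S_0\cap(\{x\}\times\R)=[a_\infty(x),b_\infty(x)]$, where $a_\infty=\lim_k a_k$ and $b_\infty=\lim_k b_k$, is a nonempty compact interval. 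This yields both the connectedness of the fibers in (5) and the surjectivity $r_0(S_0)=\R^{n-1}$ in (2).

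For (3), observe that $a_k\uparrow a_\infty$ and $b_k\downarrow b_\infty$ pointwise, so $a_\infty$ is lower semicontinuous and $b_\infty$ is upper semicontinuous. Therefore $U^-=\{(x,t):t<a_\infty(x)\}$ and $U^+=\{(x,t):t>b_\infty(x)\}$ are open, disjoint (since $a_\infty\le b_\infty$), and cover $S_0^c$; moreover $\ell^{-1}((-\infty,k_1))\subset U^-$ and $\ell^{-1}((k_2,\infty))\subset U^+$, so both are nonempty. Each is path-connected: from any of its points one moves vertically (which preserves membership) into the corresponding convex half-space and joins points there. Hence $S_0^c$ has exactly two connected components.

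Item (4) is the crux and the main obstacle. Put $\Gamma=\Ima(\alpha_\infty)$. Since $\ell(\alpha_\infty(t))=\lfloor t\rfloor\,\ell(v)+\ell(\alpha(t-\lfloor t\rfloor))$ and $\ell(v)\neq0$ by transversality, the map $\ell\circ\alpha_\infty$ is proper; hence $\alpha_\infty^{-1}(R)$, and therefore $\Gamma\cap R$, are compact. Setting $C_k=\Gamma\cap\hf^{-k}(R)$, these are nested compact sets with $\bigcap_k C_k=\Gamma\cap S_0$, so by the finite intersection property it suffices to prove each $C_k\neq\emptyset$. Iterating Equation~\ref{lift1} gives $\hf^k(y+v)=\hf^k(y)+A_f^k v$, whence $\ell\big(\hf^k(\alpha_\infty(t+1))\big)=\ell\big(\hf^k(\alpha_\infty(t))\big)+\deg(g)^k\ell(v)$; the increment being a nonzero constant, $\ell\circ\hf^k\circ\alpha_\infty$ is continuous and unbounded above and below, so by the intermediate value theorem it attains every value in $[k_1,k_2]$, producing a point of $\Gamma$ in $\hf^{-k}(R)$. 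Thus $C_k\neq\emptyset$ and $\Gamma\cap S_0\neq\emptyset$. Once the continuity of $a_k,b_k$, the semicontinuity of $a_\infty,b_\infty$, and the properness of $\ell\circ\alpha_\infty$ are secured, the only genuinely delicate point is controlling the sign and growth of the increment $\deg(g)^k\ell(v)$, which is exactly where the fiber expansion $\ell\circ A_f=\deg(g)\,\ell$ and the hypothesis $\deg(g)\ge2$ enter.
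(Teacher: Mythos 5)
Your proof is correct, and its overall strategy coincides with the paper's: $S_0$ is analysed as a nested intersection of slabs $\hf^{-k}(R)$ whose trace on each fiber $\{x\}\times\R$ is a compact interval $[a_k(x),b_k(x)]$, which yields items (1), (2), (5) and (6) essentially as in the paper (your functional $\ell$ with $\ell\circ A_f=\deg(g)\,\ell$ simply makes the slab $\ell^{-1}([k_1,k_2])$ and the fiberwise monotonicity explicit; it also fixes the direction of the nesting, which the paper's one-line argument for item (2) states with the iterates reversed). Where you genuinely diverge is in items (3) and (4). For (3), the paper only observes that each $\hf^{-k}(R)^c$ has two components and asserts the same for the increasing union of these complements; your description of $S_0^c$ as $\{t<a_\infty(x)\}\cup\{t>b_\infty(x)\}$, with $a_\infty$ lower and $b_\infty$ upper semicontinuous, supplies the separation argument that this assertion implicitly needs. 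For (4), the paper deduces the conclusion from item (3) (a connected set meeting both components of $S_0^c$ must meet $S_0$), whereas you prove $\Ima(\alpha_\infty)\cap\hf^{-k}(R)\neq\emptyset$ at every finite stage via the intermediate value theorem applied to $\ell\circ\hf^k\circ\alpha_\infty$, whose increment over one period is $\deg(g)^k\ell(v)\neq 0$, and then conclude by nested compactness; this route is independent of item (3) and makes explicit where the transversality of $v$ to $P_0$ and Equation \ref{lift1} enter. Both arguments are valid; yours is longer but self-contained and leaves fewer steps to the reader.
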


\begin{proof}
\begin{enumerate}
\item By definition of $S_0$.
\item Given $x\in \R^{n-1}$ we have that  
\[\emptyset \neq \bigcap_{k\in\N} \hf^k(\{\hat h^{-k}(x) \}\times [k_1,k_2])\subset S_0\cap \hat r_1^{-1}(x).\]
\item $\R^n\cap \hf^{-k}( [P_0+k_1 e_n,P_0 + k_2 e_n])^c$ has two connected components because $\hf$ is a homeomorphism, and $\R^n\cap [P_0+k_1 e_n,P_0 + k_2 e_n]^c$ has two connected components. Then, this property is verified by $S_0$.
\item Given such a curve $\alpha$,  $\alpha_\infty$ intersects both connected components of $\R^n \cap  [P_0+k_1 e_n,P_0 + k_2 e_n]^c$. Therefore $\alpha$ intersects both connected components of $\R^n\cap S_0^c$. Since $\alpha_\infty$ is continuous, it intersects $S_0$ in some point.   
\item If two points project to the same point, then the dynamics of both remain between the two hyperplanes. Since $\hf$ is a skew-product, the whole segment remains between the two planes.
\item This is because $\hat r_1$ is a semi-conjugacy between $\hf$ and $\hat h$.
\end{enumerate}
\end{proof}

\begin{lema}\label{PropS}
There exists $S\subset \R^n$ which verifies:
\begin{enumerate}
\item $\hf(S) = S$.
\item If we define $r:S\to \R^{n-1}$ by $r = \hat r_1|S$, then $r(S) = \R^{n-1}$.
\item Given $\alpha:[0,1]\to \R^n$ such that $\alpha(1) = \alpha(0) + v$ with $v$ transverse to $P_0$, then $S\cap \Ima(\alpha_\infty)\neq\emptyset$.
\item $r^{-1}(x)$ is a connected set $\forall x\in \R^{n-1}$, therefore either a point or an interval. 
\item $r$ is a semi-conjugacy between $\hf_{|S}$ and $\hat h$. This is $\hat h \circ r = r \circ \hf_{|S}$
\item The interior of $S$ is empty.
\end{enumerate}

\end{lema}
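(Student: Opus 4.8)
The plan is to build $S$ out of the set $S_0$ furnished by Lemma \ref{PropS0}, which already satisfies versions of properties (1)--(5); the only feature it may lack is (6), because the fibres $r_0^{-1}(x)$ can be nondegenerate intervals and then $S_0$ itself has nonempty interior. So I must thin $S_0$ down to a set of empty interior without destroying the crossing property (3). Normalize the situation by letting $\pi:\R^n\to\R$ be the linear projection with kernel $P_0$ and $\pi(e_n)=1$, so that the slab is exactly $[P_0+k_1e_n,P_0+k_2e_n]=\{x:k_1\le\pi(x)\le k_2\}$, and since $S_0\subset\hf^{0}(\text{slab})=\text{slab}$ we have $k_1\le\pi(s)\le k_2$ for every $s\in S_0$. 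For $x\in\R^{n-1}$ write $r_0^{-1}(x)=\{x\}\times[a(x),b(x)]$, a nonempty interval by Lemma \ref{PropS0}(2),(5).

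First I would identify the two components of $\R^n\setminus S_0$ from Lemma \ref{PropS0}(3). The connected sets $\{\pi>k_2\}$ and $\{\pi<k_1\}$ each lie in a single component, and in \emph{distinct} ones: otherwise, since every vertical fibre $\{x\}\times\R$ meets $S_0$ in $[a(x),b(x)]$, its two complementary rays would both land in that one component, forcing $\R^n\setminus S_0$ to be connected. Call $A^+$ the component meeting $\{\pi>k_2\}$ and $A^-$ the other; this same fibre analysis gives $A^+\cap(\{x\}\times\R)=(b(x),\infty)$ and $A^-\cap(\{x\}\times\R)=(-\infty,a(x))$. I then set
\[ S=\partial A^+=\overline{A^+}\setminus A^+. \]

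Now for the verification. Since $\hf$ is a homeomorphism fixing $S_0$ setwise, it permutes $\{A^+,A^-\}$; Proposition \ref{LinProp} gives $\pi(\hf(x))=\deg(g)\pi(x)$ up to a bounded error (using $\pi\circ A_f=\deg(g)\,\pi$ and the orientation reduction of Remark \ref{RemOri}), so points with $\pi$ large go into $\{\pi>k_2\}\subset A^+$; hence $\hf(A^+)=A^+$ and $\hf(S)=\partial(\hf(A^+))=S$, which is (1). As $A^-$ is open and disjoint from $A^+$ it misses $\overline{A^+}$, so $S\subset S_0$ and $r=\hat r_1|_S=r_0|_S$, whence (5) follows from Lemma \ref{PropS0}(6) and (1). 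A short closure computation shows $\overline{A^+}\cap(\{x\}\times\R)=[\ell(x),\infty)$ with $a(x)\le\ell(x)\le b(x)$, so $S\cap(\{x\}\times\R)=[\ell(x),b(x)]$: a nonempty interval containing $(x,b(x))$, which yields both (2) and (4). For (3), if $\alpha(1)=\alpha(0)+v$ with $v$ transverse to $P_0$ then $\pi(\alpha_\infty(t))\to\pm\infty$, so $\Ima(\alpha_\infty)$ meets $\{\pi>k_2\}\subset A^+$ and $\{\pi<k_1\}\subset A^-\subset\R^n\setminus\overline{A^+}$; a connected set meeting an open set and its exterior meets the boundary, so $\Ima(\alpha_\infty)\cap S\neq\emptyset$. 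Finally (6) is automatic: a frontier $\overline{A^+}\setminus A^+$ has empty interior, since any nonempty open subset of $\overline{A^+}$ must meet $A^+$.

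The point I expect to be the real obstacle — and the reason for taking the frontier $\partial A^+$ rather than the naive graph $x\mapsto(x,b(x))$ of the fibrewise supremum — is exactly property (3). Because $S_0$ is closed, $b$ is only upper semicontinuous and may jump upward, and then a transverse curve can slip from above that graph into the interior of $S_0$ through a jump without ever meeting the graph; so the graph fails (3). Passing to $\partial A^+$ cures this, since the proof of (3) uses only connectedness together with the separation of $A^+$ from $\R^n\setminus\overline{A^+}$, and — conveniently — the same move to a frontier of an open set simultaneously delivers the empty interior required in (6).
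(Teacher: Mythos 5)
Your construction is correct, and it is genuinely different from the one in the paper. The paper defines $S=\max(S_0)$ explicitly and fibrewise: the single top point $(x,b(x))$ over each $x\in r_0(Int(S_0))$, together with the whole fibre $r_0^{-1}(x)$ over the remaining $x$, and then asserts $\partial S_0=\min(S_0)\cup\max(S_0)$ and that $\max(S_0)$ inherits all six properties. You instead take $S=\partial A^+$, the frontier of the upper complementary component of $S_0$. The trade-off is exactly the one you identify at the end: the paper's set is concrete, but since $b$ is only upper semicontinuous its graph can have ``spikes'' $\{x_0\}\times[\ell(x_0),b(x_0))$ lying in $\partial S_0$ yet not in $\max(S_0)$ when $x_0\in r_0(Int(S_0))$, and verifying the crossing property (3) for the literal $\max(S_0)$ requires an argument at precisely those points (the paper does not spell it out). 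Your frontier $\partial A^+$ absorbs those spikes automatically, so (3) reduces to the clean separation fact that a connected set meeting an open set $A^+$ and the exterior of $\overline{A^+}$ must meet $\partial A^+$, and (6) is free because the frontier of an open set has empty interior; the price is that the fibres of your $S$ are intervals $[\ell(x),b(x)]$ rather than (generically) points, which is harmless since the lemma only asks for connected fibres. The remaining verifications --- $\hf(A^+)=A^+$ via $\pi\circ A_f=\deg(g)\,\pi$ and the bounded-distance estimate of Proposition \ref{LinProp}, the identification of the two complementary components through the vertical rays, and $S\subset S_0$ giving (5) --- are all sound.
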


\begin{proof}
The set $S_0$ in Proposition \ref{PropS0} has all the above properties except possibly for the last one. If it does, we take $S= S_0$ and we are done. If this is not the case, for each $x\in \R^{n-1}$ we define $a(x)\leq b(x)\in \R$ such that  $r_0^{-1}(x) = \{x\}\times [a(x),b(x)]$. We build now the following two sets 
\[ \min(S_0) = \left(\bigcup_{x\in r_0(Int(S_0))}\{(x,a(x))\} \right)\cup\left( \bigcup_{x\notin r_0(Int(S_0))} r_0^{-1}(x)\right), \] 
and 
\[ \max(S_0) =\left(\bigcup_{x\in r_0(Int(S_0))}\{(x,b(x))\}\right) \cup \left(\bigcup_{x\notin r_0(Int(S_0))} r_0^{-1}(x)\right).\] 
These sets are well defined by item $2$ and $5$ in Lemma \ref{PropS0}. In particular, the boundary of $S_0$ verifies
\[\partial S_0 = \min(S_0)\cup \max(S_0).\]
Since $f$ preserves the orientation in the fibers, we have that $\hf (\max(S_0)) = \max(S_0)$ and \linebreak $\hf(\min(S_0))= \min(S_0)$. Now $\max(S_0)$ and $\min(S_0)$ verifiy all the properties of Lemma $\ref{PropS0}$ and $int(\max(S_0))=\emptyset$. We define then $S$ as $\max(S_0)$.   
\end{proof}

We are now in condition to prove Theorem \ref{SP2}.

\begin{proof}[Proof of Theorem \ref{SP2}]

Let us assume that $f$ is not transitive. Let $U$ and $V$ be as in Proposition \ref{NotTran} and write  $\hu=p^{-1}(U)$ and $\hv= p^{-1}(V)$. Let  $i:U\to \T^n$ be the inclusion and denote by $P_1$ the subspace $ <i_\htag(\pi_1(U))>$ of $ \R^n$. Suppose that  $i_\htag(\pi_1(U))$ has no element transverse to $P_0$, this means $P_1$ is an $A_f$ invariant subspace of $P_0$. Since $\hat r_{1|P_0}:P_0\to \R^{n-1}$ is a linear isomorphism which conjugates $A_{f|P_0}$ and $A_h$, then $\hat r_1(P_1)$ is an invariant subspace of $A_h$ and $|\det(A_{h|\hat r_1(P_1)})| = |\det(A_{f|P_1})|$. By Lemma \ref{LemPrin}, $|\det(A_{f|P_1})|= |\det(A_f)|$, and since 
\[A_f= \left( \begin{smallmatrix}
A_h & 0 \\
* & \deg(g) 
\end{smallmatrix}\right),\]
we have $ |\det(A_f)| = \deg(g)|\det(A_h)|$. Therefore $|\det(A_{h|\hat r_1(P_1)})|= \deg(g)|\det(A_h)|$. Since $\deg(g)\geq 2$, we have $|\det(A_{h|\hat r_1(P_1)})|> |\det(A_h)|$ which contradicts Proposition \ref{PropDet}.

We have proved that $i_\htag(\pi_1(U))$ has an element transverse to $P_0$. Therefore there exists $\alpha:[0,1]\to \hu$ such that $\alpha(1) = \alpha(0) + v$ with $v$ transverse to $P_0$. By property 3 in Lemma \ref{PropS}, $\hu\cap S\neq \emptyset$. Analogously $\hv\cap S\neq \emptyset$. Let us call these intersections $U_S$ and $V_S$. Since $h$ is transitive, $r(U_S)$ and $r(V_S)$ are open and dense in $\R^{n-1}$. Take $W=int (  r(U_S)\cap r(V_S))\neq \emptyset$. Since $int(S) = \emptyset$, there exists $w\in W$ such that $r^{-1}(w)$ is a point. Such point belongs to $\hu \cap \hv$ which is a contradiction.
\end{proof}

Let us see why Theorem \ref{SP2} implies Corollary \ref{CorDom1}.
\begin{lema}
If $f:\T^n\to \T^n$ is a skew-product endomorphism of the form $f=(h,g)$ such that $|\deg(h)|<|\deg(g)|$, then $\dim(J_n)=1$.
\end{lema}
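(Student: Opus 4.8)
The plan is to compute the characteristic polynomial of $A_f$ and reduce the statement to a divisibility obstruction. First I would recall that $e_n$ is an eigenvector of $A_f$ with eigenvalue $\lambda_n = \pm\deg(g)$, and that the skew-product relation $r_1\circ f = h\circ r_1$ forces $A_f$ to be block lower-triangular of the form $\left(\begin{smallmatrix} A_h & 0 \\ * & \lambda_n\end{smallmatrix}\right)$, with $A_h$ the $(n-1)\times(n-1)$ linear part of $h$ occupying the top-left block and the last column equal to $\lambda_n e_n$. Taking determinants of $tI-A_f$ and exploiting the block-triangular shape yields the factorization $\chi_{A_f}(t) = \chi_{A_h}(t)\,(t-\lambda_n)$.

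Next I would observe that $\dim(J_n)=1$ follows as soon as $\lambda_n$ has algebraic multiplicity exactly one as an eigenvalue of $A_f$: in that case there is a single Jordan block associated to $\lambda_n$ and it necessarily has size one, and this block is $J_n$. By the factorization above, algebraic multiplicity one is equivalent to $\lambda_n$ not being a root of $\chi_{A_h}$, i.e.\ not an eigenvalue of $A_h$. So the whole statement is reduced to showing that $\lambda_n$ is not an eigenvalue of $A_h$.

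The core of the argument is then a number-theoretic obstruction. Suppose, for contradiction, that $\lambda_n$ were an eigenvalue of $A_h$. Since $\lambda_n = \pm\deg(g)\in\Z$ and $\chi_{A_h}$ is a monic polynomial with integer coefficients, $\lambda_n$ is an integer root; by the Rational Root Theorem (the same elementary fact used in Lemma \ref{LemEig}), it divides the constant term of $\chi_{A_h}$, which equals $\pm\det(A_h)$. Hence $\deg(g)=|\lambda_n|$ divides $|\det(A_h)|$. Recalling that $h$ is a covering map, so that $|\det(A_h)|=\deg(h)$ by Theorem \ref{degc} and Remark \ref{DegDet}, with $\deg(h)\geq 1$, we would obtain $\deg(g)\mid\deg(h)$ while $1\leq\deg(h)<\deg(g)$, which is impossible. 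Therefore $\lambda_n$ is not an eigenvalue of $A_h$, its algebraic multiplicity in $A_f$ is one, and $\dim(J_n)=1$.

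The only points requiring mild care are the passage from algebraic multiplicity one to $\dim(J_n)=1$, which is immediate from the structure of the Jordan form, and the identity $\deg(h)=|\det(A_h)|\geq 1$ for the unsigned degree; the heart of the matter is simply that a positive integer cannot properly divide a strictly smaller positive integer. I do not expect any genuine obstacle here—the argument is short once the block form of $A_f$ and the divisibility of the constant term are in place.
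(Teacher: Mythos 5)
Your proof is correct and follows essentially the same route as the paper: block-triangular form of $A_f$, factorization $\chi_{A_f}(t)=\chi_{A_h}(t)(t\mp\deg(g))$, and a divisibility obstruction showing $\pm\deg(g)$ cannot be an eigenvalue of $A_h$. The only cosmetic difference is that you invoke the Rational Root Theorem directly on the constant term of $\chi_{A_h}$, whereas the paper cites Proposition \ref{PropDet} (which, for a one-dimensional invariant subspace, reduces to exactly the same fact via Lemma \ref{LemEig}).
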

\begin{proof}
By a simple computation we have:
\[A_f= \left( \begin{smallmatrix}
A_h & 0 \\
* & \pm \deg(g) 
\end{smallmatrix}\right).\]
  
If $\chi_{A_f}$ and $\chi_{A_h}$ are the characteristic polynomials of $A_f$ and $A_h$ respectively, then $\chi_{A_f}(t)=- \chi_{A_h}(t)(t- \pm \deg(g))$. This implies that the eigenvalues of $A_f$  are $\pm \deg(g)$ and the eigenvalues of $A_h$. By Proposition \ref{PropDet}, $\pm \deg(g)$ can not be an eigenvalue of $A_h$ and therefore $\dim(J_n)=1$. 
\end{proof}

Analogously, let us see why Theorem \ref{SP2} implies Corollary \ref{CorDom2}.

\begin{lema}
If $h:\T^n\to \T^n$ is an endomorphism such that $|A_h v|> |\deg(g)| |v|$ $\forall v\in\R^n -\{0\}$, then $\dim(J_n)=1$.
\end{lema}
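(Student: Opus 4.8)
The plan is to copy, almost verbatim, the structure of the proof of the previous lemma (the one establishing Corollary \ref{CorDom1}): I would reduce the assertion $\dim(J_n)=1$ to the single statement that the eigenvalue of $A_f$ along $e_n$ does not occur in the spectrum of $A_h$, and then read off that statement directly from the expansion hypothesis.

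First I would record the block structure coming from the skew-product form, exactly as before,
\[A_f=\left(\begin{smallmatrix} A_h & 0 \\ * & \pm\deg(g)\end{smallmatrix}\right),\]
so that the characteristic polynomials satisfy $\chi_{A_f}(t)=-\chi_{A_h}(t)\,(t-(\pm\deg(g)))$. Writing $\lambda_n=\pm\deg(g)$ for the eigenvalue associated to $e_n$, this factorisation shows that the spectrum of $A_f$ is the spectrum of $A_h$ together with $\lambda_n$. The next step is to verify that $\lambda_n$ is not an eigenvalue of $A_h$. This is the only point at which the hypothesis is used, and it is immediate: $\lambda_n=\pm\deg(g)$ is a \emph{real} number, so if it were an eigenvalue of the integer (hence real) matrix $A_h$, there would exist a real eigenvector $v\neq 0$ with $A_h v=\lambda_n v$, giving $|A_h v|=|\lambda_n|\,|v|=\deg(g)|v|$ and contradicting the assumption $|A_h v|>\deg(g)|v|$.

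Finally I would conclude that, since $\lambda_n$ is not a root of $\chi_{A_h}$, the displayed factorisation makes $\lambda_n$ a simple root of $\chi_{A_f}$; having algebraic multiplicity one, it contributes a single $1\times 1$ Jordan block, and because $e_n$ is an eigenvector for $\lambda_n$ this block is precisely $J_n$, whence $\dim(J_n)=1$. I do not expect a genuine obstacle here. The only delicate-looking point is passing from an expansion estimate stated for real vectors to a statement about (possibly complex) eigenvalues; but by restricting attention to the single \emph{real} value $\lambda_n$ one excludes it from the spectrum of $A_h$ using a real eigenvector alone, bypassing any need for singular-value or operator-norm estimates. (If one wished, one could note in passing that the hypothesis in fact forces \emph{every} eigenvalue of $A_h$ to have modulus exceeding $\deg(g)$, since $\|A_h^{-1}\|<1/\deg(g)$ bounds the spectral radius of $A_h^{-1}$, but this stronger fact is not needed.)
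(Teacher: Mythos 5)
Your proof is correct and follows essentially the same route as the paper: reduce to showing that $\pm\deg(g)$ is not an eigenvalue of $A_h$ via the block-triangular factorisation of $\chi_{A_f}$, and then exclude it using a real eigenvector and the expansion hypothesis. Your write-up is just a more explicit version of the paper's two-line argument (and correctly flags why only the real eigenvalue $\pm\deg(g)$ needs to be ruled out), so nothing further is needed.
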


\begin{proof}
By the arguments of the previous lemma, we just need to show that $\pm \deg(g)$ is not an eigenvalue of $A_h$. If it were, then there would exists $v\in \R^n - \{0\}$ such that $A_h v = \pm \deg(g) v$. This contradicts our hypothesis.
\end{proof}


\begin{thebibliography}{99}

\bibitem[A]{An15}	
M. Andersson, \emph{Transitivity of conservative toral endomorphisms}. Nonlinearity, Volume 29, Number 3, 1047.
	
\bibitem[AH]{AH94} N. Aoki, K. Hiraide, \emph{Topological theory of dynamical systems}, North-Holland Mathematical Library, vol. 52, North-Holland Publishing Co., Amsterdam, 1994, Recent Advances.
	
\bibitem[H]{Ha02}
A. Hatcher, Algebraic Topology. Cambridge University Press. (2002) 

\bibitem [LP] {LiPu12}
C. Lizana and E. Pujals, Robust Transitivity for Endomorphisms. Ergodic Theory and Dynamical Systems, Volume 33, Issue 4
 (2013), 1082-1114.
	
		
\bibitem [R] {Ra16}	
W. Ranter, Transitive Endomorphisms with Critical Points. arxiv:1608.06921.


\bibitem[W]{Wa70}	
P. Walters, Anosov Diffeomorphisms are topologically stable, Topology 9 (1970), 71-78.


\end{thebibliography}
\end{document}